\documentclass[article,onefignum,onetabnum]{siamart190516}
\usepackage{xcolor}
\usepackage{bbold}
\usepackage{soul}
\usepackage{comment}
\usepackage[cal=boondox,scr=boondoxo]{mathalfa}
\usepackage{amsmath,amssymb}
\DeclareMathAlphabet{\mathpzc}{OT1}{pzc}{m}{it}
\def \B {{\mathfrak  B}}
\def \U {{\mathcal  U}}
\def \CC{{\mathbf C}}
 \def\bel{\begin{equation}\label}
\def\eeq{\end{equation}}
\def\cS{\mathfrak{T}}

\def\h{\Psi}

\def \va{\mathpzc{v}}

\def\vz{\mathpzc{v}}

\DeclareMathAlphabet{\mathpzc}{OT1}{pzc}{m}{it}

\newcommand{\cR}{\mathbb{R}}
\newcommand{\cN}{\mathbb{N}}

\def\ds{\displaystyle}

\def\xbo{\check x}

\def\A{\mathcal{A}}
\def\B{\mathfrak{B}}
\def\C{\mathcal{C}}

\def\N{\mathcal{N}}

\def\T{\mathcal{T}}
\def\U{\mathcal{U}}

\def\W{\mathcal{W}}

\def\xbo{\check x}

\def\d{ {\rm d} }
\def\vsmm{\vskip0.2truecm}

\def\begi{\begin{itemize}}
	\def\endi{\end{itemize}}

\def\ds{\displaystyle}

\def\N{{\cal M}}

\def\W{{\mathcal W}}

\def\K{{\mathcal K}}
\def\C{{\cal C}}
\def\T{{\cruk}}

\def\rr{\mathbb{R}}

\def\sqr#1#2{\vbox{\hrule height .#2pt
		\hbox{\vrule width .#2pt height #1pt \kern #1pt
			\vrule width .#2pt}\hrule height .#2pt }}

\def\bega{\begin{array}}
	\def\enda{\end{array}}
\def\begi{\begin{itemize}}
	\def\endi{\end{itemize}}

\def\vz{\mathpzc{v}}
\def\cruk{{\mathfrak T}}

\renewcommand{\thesection}{\arabic{section}}
\newtheorem{thm}{Theorem}[section]
\usepackage{yfonts}

 \newtheorem{lma}{Lemma}[section]
\newtheorem{prop}{Proposition}[section]
\newtheorem{remark}{Remark}[section]

\usepackage{lipsum}
\usepackage{amsfonts}
\usepackage{graphicx}
\usepackage{epstopdf}
\usepackage{algorithmic}

\headers{Strict minimizers and higher-order abnormality}{M. Motta and M. Palladino and F. Rampazzo}

\title{Minimizers that are not impulsive minimizers and higher order abnormality 
\thanks{Submitted to the editors DATE.
}}

\author{Monica Motta\thanks{Department of Mathematics ``Tullio Levi-Civita", University of Padova, Italy 
  (\email{fmotta@math.unipd.it}).}
\and Michele Palladino\thanks{Department of Engineering, Information Sciences and Mathematics, University of  L'Aquila, Italy 
  (\email{michele.palladino@univaq.it}).}
\and Franco Rampazzo\thanks{Department of Mathematics ``Tullio Levi-Civita", University of Padova, Italy 
  (\email{rampazzo@math.unipd.it}).}
}

\usepackage{amsopn}

\begin{document}




\maketitle

\begin{abstract}
	This paper addresses two related problems in optimal control.  The first investigation consists of compatibility issues between two classical approaches to deriving necessary conditions for  optimal control problems with a  final target: the {\it set-separation approach} and {\it penalization techniques.} These methods generally lead to non-equivalent conditions, mainly due to their reliance on different notions of tangency at the target.
	We address this issue by considering {\it Quasi Differential Quotient (QDQ) approximating cones} (which are fit for the set-separation approach ) and identifying conditions under which the Clarke tangent cone (which is a typical tool within penalization techniques) is also a QDQ approximating cone. In particular, we show that this property holds under suitable local invariance assumptions or when the target coincides locally with an 
	{\it $r$-prox regular set}. 
	In the second part of the paper we apply this compatibility result to the study of {\it infimum-gap phenomena} in optimal control problems with unbounded controls and impulsive extensions. In particular, we establish a connection between the occurrence of infimum gaps for {\it strict-sense minimizers} and abnormality in a higher-order Maximum Principle involving Lie brackets. While the abnormality–gap correspondence beyond first-order conditions   has been already established for extended-sense --i.e. impulsive-- minimizers,  a topological argument involving the former and the utilization of the above compatibility issues allow us to extend this correspondence to  strict-sense minimizers.
\end{abstract}
\begin{keywords}
 Optimal control, Infimum gap,   Impulsive control, Higher-order   conditions 
\end{keywords}

 \begin{AMS} 49K15, 49N25, 93C10  
 \end{AMS} 
  
\section{Introduction}

Non-equivalent approaches to deriving necessary conditions for a continuous-time optimal control problem with a closed final target $\mathcal{S}$ can be found in the literature. In particular, two methods have been developed: on the one hand, the so-called {\it set-separation approach} (see, e.g. \cite{SussCDC05}-\cite{Suss-ho}, \cite{warga2,warga}); on the other hand, 
	a penalization technique,  rooted in Ekeland's variational principle (see, e.g. \cite{vinter},\cite{ClarkeLed}). 
	 It is well known that  these approaches may yield non-equivalent  necessary conditions, mainly because they rely on different notions of {\it tangency} at   points of the  target (see \cite{B}). 
However, under suitable {\it compatibility} assumptions, nothing precludes studying some issues   connected with important properties of necessary conditions  using both approaches. This idea  motivates the first part of the present paper. In the second part,  we apply this compatibility framework  to investigate an infimum-gap problem. The main obstacle to combining the  set-separation approach with  the penalization technique lies in the fact that, in general, a tangent cone -- e.g. the Clarke tangent cone--
to a point $\bar x$ of the final target $\mathcal{S}$  is not what is called, in the set-separation approach, an  {\it approximating cone}. In this work we adopt the notion of a
 \textit{QDQ approximating cone}, which encompasses several types of approximating cones appearing in the literature.

We address this theoretical question in the next subsection, while in the subsequent  subsection we will discuss an application to a concrete problem, related to the {\it infimum-gap} question.

\subsection{A theoretical issue} 
  We first introduce the notion of an approximating cone. We use the acronym QDQ to mean  {\it Quasi Differential Quotient}.  We refer to Section \ref{S_T_QDQ} for the  definition of 
  QDQ, a notion of a generalized differential (which in our application will be an element of $Lin(\rr^N,\rr^n)$, however, in general it can be a subset of such linear maps) introduced in \cite{PR}. It coincides with a sort of regular instance of Sussmann's Approximate Generalized Differential Quotient. A QDQ {\it approximating cone $\K$    to  a set $\mathcal{S}\subseteq\rr^n$ at  a point $\bar x\in \mathcal{S}$} is  the image $\K =
  L\cdot\Gamma$, where  $\Gamma$ is   a cone (of dimension $N\leq n$) and $L\in Lin(\rr^N,\rr^n)$ is a QDQ   of a map from a subset of $\rr^N$ into $\mathcal{S}$. 
 
 We will also make use of the well-known {\it Clarke  tangent cone}. We recall  (see Section \ref{subGenCone}) that  the Clarke  tangent cone to $\mathcal{S}$ at $\bar x$, here denoted  by $T^C_\mathcal{S}(\bar x)$, can be defined 
 as follows:
$$\ds T_\mathcal{S}^C(\bar x):=\left\{v\in   \rr^n: \ \ \underset{t\downarrow 0, \, x\to \bar x}{\limsup}\,\frac{{d_\mathcal{S}(x+t v)-d_\mathcal{S}(x)}}{t}\right\}$$   (where, for any $z\in\rr^n$ and any  ${\mathcal A}\subseteq\rr^n$,  $d_{\mathcal A}(z)$ denotes the distance of the point  $z$ from the subset ${\mathcal A}$, that is, $d_{\mathcal A}(z)=\inf\{|z-x|:\ x\in \mathcal A\}$).
The afore-mentioned compatibility  issue can be expressed as follows:

{\bf Q1.}{\it  Under which assumptions on $\mathcal{S}$ does the Clarke  tangent cone $T_\mathcal{S}^C(\bar x)$    also constitute a  QDQ approximating cone   to $\mathcal{S}$ at $\bar x$} ?

To give an answer to question {\bf Q1}, we need the following  notion of  {\it $r$-prox regular set } (see Section \ref{pr_sect}), a notion that generalizes both the concepts of convex set and  of $C^2$ set. Roughly speaking, for   $r>0$, a subset $\A\subseteq  \rr^n$ is a {\it $r$-prox regular set} (also called a set {\it with positive reach}) 
if  for every $y\in  \rr^n\backslash \A$ such that $d_\A(y)\leq r$  there exists a unique projection of 
$y$ onto the boundary of $\A$ (denoted by $\partial \A$).

Our answer to {\bf Q1} is the following (see Theorem \ref{Th_T^C}):
 
{\em {\bf Result 1.}  If there exists a real number $\delta>0$   such that  either {\bf (i)} $(\bar x+T^C_\mathcal{S}(\bar x))\cap B_\delta(\bar x)\subseteq \mathcal{S}$, or {\bf (ii)} $\mathcal{S}\cap  B_\delta(\bar x)$ coincides with the restriction to $B_\delta(\bar x)$ of a $r$-prox regular set  for some $r>0$, then the Clarke tangent cone $T^C_\mathcal{S}(\bar x)$ to $\mathcal{S}$ at $\bar x$ is a QDQ approximating cone to $\mathcal{S}$ at $\bar x$.}
 
 As a general consequence, it follows that, under  hypothesis  {\bf (i)} or {\bf (ii)}  in {\bf Result 1},  any statement about necessary conditions  expressed in terms of QDQ approximating cones to a target set $\mathcal{S}$ (within the set-separation approach)  implies  analogous necessary conditions  in which the so-called transversality condition is formulated in terms of the Clarke normal cone. As we shall see in the application to the infimum-gap phenomenon (see the next subsection), this is particularly helpful when one is dealing with higher-order necessary conditions (involving Lie brackets of the dynamical vector fields).
 
 \subsection{An application to the question of infimum gaps}
 
 When the infimum of an optimal control problem is not attained, a standard strategy consists of extending the domain of optimization to its closure with respect to an appropriate topology. Classical examples include {\it relaxation} (by convexification or extension to probability measures)    or {\it impulsive extension} of non-coercive problems with unbounded controls. Clearly, an important prerequisite of such  extensions is that no  infimum gaps occur, which means that  the infima of the original and extended problems coincide. Furthermore, one  should be able to construct near-optimal processes for the original problem from a minimizer of the extended one. Therefore,  identifying sufficient conditions that  exclude an infimum gap is crucial both for the theoretical foundation and for the numerical implementation of optimal control methods.   Let us briefly explain how the goal of avoiding infimum gaps is connected with the theoretical issue described in the previous subsection.  
 

Since the early contributions of Warga \cite{warga2}--\cite{warga3}, it has been known that, in relaxed problems that exhibit an infimum gap, the extended minimizer  satisfies  necessary conditions in the form of  an {\it abnormal} Maximum Principle: this means that the multiplier connected with the cost is  equal to zero.  Within the penalization approach, this link between abnormality and infimum gap has been recognized also for  nonsmooth data and state-constrained problems, see e.g. \cite{PV1}--\cite{PV3} \cite{FM121}.
 Here we deal with {\it local minimizers}, where the topology is the $L^\infty$-topology on trajectories.

However, within the  penalization approach, these results have not been generalized to higher-order necessary conditions. Instead, through the set-separation approach, one  can establish the following result (see \cite{MPR} and \cite{MPRArXiv}) concerning the minimization of unbounded control problems and their impulsive extension:

{\bf Fact 1.} (Theorem \ref{ThIsolated}) {\it A local extended   $L^\infty$-minimizer of an unbounded-impulsive optimal control problem at which an infimum gap occurs is necessarily    abnormal  for a higher-order --i.e., including Lie brackets-- Maximum Principle}.

The importance of {\bf Fact 1} clearly  relies on the fact that  an extended process might  be abnormal for a first-order principle but normal for a higher-order one \cite[Sec.5]{MPR}, which would rule out the possibility of an infimum gap (despite abnormality at first order).

 The type of infimum gap discussed so far --where the cost of an admissible extended process is strictly lower than that of nearby admissible non-extended processes-- is the most commonly studied. However, another type of infimum gap may occur: precisely, it may happen   that a local minimizer of the original problem --here referred to as a {\it strict-sense minimizer}-- fails to be a local minimizer of the extended problem. Such a gap can undermine the stability of the optimal solution under perturbations.
 The relationship between this second type of infimum gap and abnormality is less understood.  In \cite{PV1}--\cite{PV3}, following Warga's conjectures, the authors proved  that a local strict-sense $L^\infty$-minimizer with an infimum gapwith respect to an extension by relaxation is abnormal for an {\em averaged} Maximum Principle,—where the adjoint equation  is replaced by a differential inclusion depending on the whole set of control values.  In \cite{FM221} this was extended  to an abstract setting that covered relaxation and impulsive extensions. 
A further achievement,  which includes the impulsive one, can be found 
 in \cite{FM124}, where it was proved that if a  strict-sense process has a local infimum gap with respect to the $L^1$-distance between controls (rather than the $L^\infty$-distance of the trajectories), then it satisfies the same abnormal Maximum Principle as its extended counterpart. This result broadens the abnormality-gap correspondence to a wider class of minimizers. 
 
 Still, all these findings on strict-sense minimizers  rely on first-order principles derived via  the penalization approach. Instead, thanks to the set-separation approach and the compatibility issue described in the previous subsection, in the present  paper  we can  achieve results connecting the occurrence of a strict-sense minimizers having an infimum gap with the abnormality of the {\it higher-order} Maximum Principle. 

Precisely, with reference to the impulsive extension of unbounded control systems and the associated higher-order Maximum Principle  introduced in Section \ref{Sprel} below, we establish the following result:
 

 {\bf Result 2.} (See Theorem \ref{Thstrict_sense}).  {\it If the target set is as in the hypotheses of Result 1 and we consider  the Clarke tangent cone to the target (at the final point of the optimal trajectory), then  any local strict-sense  $L^1$--minimizer with an infimum gap is   abnormal for  the higher-order Maximum Principle.}
 
 The proof of {\bf Result 2}  begins by showing that a local  $L^1$-strict-sense minimizer with an infimum gap is necessarily the limit (with respect to the $L^1$-distance between controls) of a sequence of admissible extended control-trajectory pairs, each of which  exhibiting a local infimum gap. Then, in view of {\bf Fact 1}, we associate to each of these approximating extended processes  an adjoint variable for which the higher-order Maximum Principle holds in abnormal form. The crucial step in the proof is therefore to show that the limit of these adjoint variables yields an  adjoint variable with respect to which the reference strict-sense minimizer is abnormal for the higher-order Maximum Principle.  

Let us observe that the  idea of using {\bf Result 1} and the properties of QDQ approximating cones to combine the two classical approaches based on set-separation and penalization respectively,   goes beyond the specific context of higher-order necessary conditions for infimum gap. For instance, it allows the derivation of a higher-order maximum principle based on the Clarke tangent cone starting from a higher-order maximum principle obtained via set-separation (and one may likely allow nonsmooth data, based on the results in \cite{AR25}). Moreover, state constraints could probably be incorporated by applying the two approaches at different stages of the analysis. 

The present paper is organized as follows. Section \ref{Sprel} provides key definitions and preliminary results. In Section \ref{S_T_QDQ}, we introduce the notion of a QDQ approximating cone and identify a class of sets for which the Clarke tangent cone also serves as a QDQ approximating cone. In Section \ref{Sgap}, we formulate an unbounded control minimum problem and discuss the two types of local infimum gaps introduced earlier. In particular, Subsection \ref{Sex} presents an illustrative example of these two types of gaps. Finally, Section \ref{mainsection} states {\bf Fact 1} and presents the main result concerning infimum gaps for strict-sense minimizers.


	\section{Preliminaries}\label{Sprel} 
	\subsection{Basic notation}\label{SecNotation}
  For every set $\mathcal A$ and every  subset ${\mathcal B}\subseteq\mathcal A$, we use ${\bf 1}_{\mathcal B}$ to denote the characteristic function of {$\mathcal B$}, namely, ${\bf 1}_{\mathcal B}(x)=1$  if $x\in {\mathcal B}$ and ${\bf 1}_{\mathcal B}(x)=0$ if $x\in {\mathcal A}\setminus{\mathcal B}$ .	We  use the notation  $\cR_+:=[0,+\infty)$ and $\rr_-:=(-\infty,0]$, and for any pair $a$, $b\in\cR$, we set $a\land b:=\min\{a,b\}$. $\mathbb{N}$ will denote the set of natural numbers, including $0$. 
If $N$ and $n$ are positive natural numbers,    $Lin(\rr^N,\rr^n)$ is the set of linear maps from $\rr^N$ into $\rr^n$ and $(\rr^N)^*:=Lin(\rr^N,\rr)$. Using the usual identification between $\rr^N$ and the dual $(\rr^N)^*$, the latter is also considered a Euclidean space. As  customary, `$\cdot$' is the scalar product in $\rr^N$. For every subset $\mathcal A\subseteq\rr^N$,  we will use $\bar{\mathcal A}$ and  $\partial \mathcal A$ to denote the closure and the boundary of $\mathcal A$, respectively. $\text{co} \mathcal A$ will be the {\it convex hull of $\mathcal A$}, that is
	the sets of all convex combinations of elements  of $\mathcal A$.
 Given an interval $I$,  we write $AC(I,\mathcal A)$  [resp. $C^0(I,\mathcal A)$,$L^1(I,\mathcal A)$,$L^\infty(I,\mathcal A)$] for the set of functions that  take values in $\mathcal A$  and are  absolutely continuous  [resp., continuous, Lebesgue integrable, Lebesgue measurable and essentially bounded] functions.
	We will  use 
	$\|\cdot\|_{L^\infty(I,\rr^N)}$,  and $\|\cdot\|_{L^1(I,\rr^N)}$ to denote 
	the $L^\infty$-norm and the $L^1$-norm, respectively. When no confusion may arise, we simply write  $\|\cdot\|_{L^\infty(I)}$, $\|\cdot\|_{L^1(I)}$, or also $\|\cdot\|_\infty$,  $\|\cdot\|_1$.
	$B^N_r({\check x})$  [resp. $\bar B^N_r({\check x})$] is the open  [resp. closed] ball in $\rr^N$ with center $\check x$ and  radius $r$. When $\check x=0$, we   write $B^N_r$  [resp. $\bar B^N_r$] instead of 	$B^N_r(0)$  [resp. $\bar B^N_r(0)$]. (When the dimension is clear from the context, we will  omit the superscript $N$.)

	\subsection{Cone transversality} 
	Let us recall some elementary notions concerning cones of $\rr^n$  (see e.g.  \cite{Suss1,sus}). A subset $\K\subseteq \rr^n$   is a {\it cone} if $\alpha k\in \K$  for all $(\alpha,k)\in \rr_+\times \K$. If $\mathcal A\subset \rr^n$ is any subset, let us define the  {\it conic hull} $\text{ span}^+\mathcal A$  and the {\it  polar $\mathcal A^{\bot}$ of $\mathcal A$ } 
 by setting$$
	\begin{array}{ll}
		\text{span}^+\mathcal A   &\doteq \left\{\sum_{i=1}^\ell \alpha_i v_i: \  \ell\in \mathbb{N},\ \alpha_i\geq 0, \,\,\,v_i\in \mathcal A,\,\,  \forall i=1,\ldots,\ell \right\}\subset \rr^n, \\[1.0ex] 
	\qquad	{\mathcal A}^{\bot} &\doteq\left\{p\in (\rr^n)^* :\   p\cdot w \leq 0 \ \ \forall \;
		w\in \mathcal A \right\}\subset (\rr^n)^*.
	\end{array}
	$$
	{ Observe that both}  $\text{span}^+\mathcal A$ and ${\mathcal A}^{\bot}$ are { convex cones,} while ${\mathcal A}^{\bot}$ is also closed.
\begin{definition}
 Let   $\K_1$, $\K_2\subseteq \rr^n $ be convex cones. We say that 
	$\K_1$ and $\K_2$ are {\em transversal}, if $
	\K_1-\K_2:=\big\{k_1-k_2 :\ (k_1,k_2)\in  \K_1\times \K_2\big\} = \rr^n$. 
	$\K_1$ and $\K_2$  are {\em strongly transversal}
	if they are transversal  and $\K_{1}\cap\K_{2} \supsetneq\{0\}$.\end{definition}
	
	\begin{prop}\label{teo2} Two  convex cones $\K_1$, $\K_2\subseteq \rr^n $ are transversal if and only if 
		either  they are strongly transversal  or they
		are complementary linear subspaces, namely  $\K_1\oplus{\K}_2=\rr^n$  (i.e., $\K_1 +\K_2=\rr^n$ and $\K_1 \cap\K_2=\{0\}$).
	\end{prop}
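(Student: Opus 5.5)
The equivalence is proved one direction at a time; the easy direction is immediate from the definitions.

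\emph{Sufficiency.} If $\K_1,\K_2$ are strongly transversal, they are transversal by definition. Suppose instead that $\K_1\oplus\K_2=\rr^n$ with both cones linear subspaces. Then $\K_2=-\K_2$, so $\K_1-\K_2=\K_1+\K_2=\rr^n$, which is transversality.

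\emph{Necessity.} Assume $\K_1-\K_2=\rr^n$ and that the cones are not strongly transversal, i.e. $\K_1\cap\K_2=\{0\}$. We must show that $\K_1$ and $\K_2$ are linear subspaces; complementarity then follows, since $\K_1+\K_2=\K_1-\K_2=\rr^n$ and the intersection is trivial.

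By symmetry it suffices to show $-k\in\K_1$ for every $k\in\K_1$, and we split into steps.
\begin{itemize}
\item Fix $k\in\K_1$. Since $-k\in\rr^n=\K_1-\K_2$, write $-k=k_1-k_2$ with $k_i\in\K_i$.
\item Then $k+k_1=k_2$. The left side lies in $\K_1$ because $\K_1$ is a convex cone, hence closed under addition. The right side lies in $\K_2$.
\item So $k+k_1\in\K_1\cap\K_2=\{0\}$, which gives $k_1=-k$ and therefore $-k\in\K_1$.
\end{itemize}
Thus $\K_1=-\K_1$. Being a convex cone closed under negation, $\K_1$ is closed under addition and under multiplication by all real scalars, so it is a linear subspace.

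For $\K_2$ the argument is the same. Take $k\in\K_2$ and write $k=k_1-k_2$ with $k_i\in\K_i$. Then $k+k_2=k_1$ lies in $\K_1\cap\K_2=\{0\}$, so $-k=k_2\in\K_2$. Hence $\K_2$ is a linear subspace as well, and $\K_1\oplus\K_2=\rr^n$.

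There is no real obstacle in this proof. The only point needing care is that a convex cone, as defined here, is closed under sums; this is what allows $k+k_1$ to be placed in $\K_1$. No closedness of the cones is required.
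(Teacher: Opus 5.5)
Your proof is correct and complete. The paper gives no proof of this proposition; it recalls it as an elementary fact about cones from the cited literature, so there is no competing route to compare against. Your direct argument is the natural one: you use $\K_1\cap\K_2=\{0\}$ together with closure of a convex cone under addition to show that each cone equals its own negative.

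Two small points that your argument relies on could be stated explicitly.

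\begin{itemize}
\item Reading ``not strongly transversal'' as $\K_1\cap\K_2=\{0\}$ requires $0\in\K_1\cap\K_2$. This holds because transversality forces both cones to be nonempty, and the definition of cone allows the scalar $\alpha=0$.
\item In the sufficiency direction you correctly kept the hypothesis that the cones are linear subspaces, and it is genuinely needed. The parenthetical conditions $\K_1+\K_2=\rr^n$ and $\K_1\cap\K_2=\{0\}$ do not by themselves imply transversality. For example, take $\K_1=[0,+\infty)$ and $\K_2=(-\infty,0]$ in $\rr$; then $\K_1-\K_2=[0,+\infty)\neq\rr$.
\end{itemize}
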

	
	\begin{prop}\label{teo3}Two  convex cones $\K_1,\K_2\subseteq \rr^n$ are not transversal  if and only if $\K_1$ and $\K_2$ are  {\rm linearly separable}, by which we mean that   $(-\K_1^\bot\cap\K_2^\bot) \backslash \{0\} \neq \emptyset,$ namely, there exists a  linear form $\lambda\in (\rr^n)^*\backslash \{0\}$
		such that 
		$ \lambda \cdot k_1\geq 0$  
		and $\lambda \cdot k_2\leq 0$, $\forall (k_1,k_2)\in \K_1\times  \K_2$,.
	\end{prop}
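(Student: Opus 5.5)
The plan is to deduce Proposition \ref{teo3} from the classical separation theorem for convex sets, using Proposition \ref{teo2} only as an aid to orientation. Set $\mathcal D:=\K_1-\K_2$. This is a convex cone, being the sum of the convex cones $\K_1$ and $-\K_2$, and it contains $0$. Non-transversality means precisely $\mathcal D\neq\rr^n$. In the linear-algebra convention used here, $\lambda\in -\K_1^\bot\cap\K_2^\bot$ means $\lambda\cdot k_1\ge 0$ on $\K_1$ and $\lambda\cdot k_2\le0$ on $\K_2$. Equivalently, $\lambda\cdot d\ge 0$ for all $d\in\mathcal D$, i.e. $\lambda\in -\mathcal D^\bot$. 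So the statement reduces to the following: a convex cone $\mathcal D\subseteq\rr^n$ is a proper subset of $\rr^n$ if and only if $\mathcal D^\bot\neq\{0\}$.

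For the implication ``separable $\Rightarrow$ not transversal'', I take $\lambda\neq0$ with $\lambda\cdot d\ge0$ on $\mathcal D$. If $\mathcal D=\rr^n$, then $-\lambda^T$ (the vector identified with $\lambda$) would lie in $\mathcal D$. This gives $-|\lambda|^2\ge0$, which is a contradiction. Hence $\mathcal D\neq\rr^n$.

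For the converse, suppose $\mathcal D\neq\rr^n$. The main point is that $\mathcal D$ need not be closed, so I first pass to its closure and its interior. I would use the finite-dimensional fact that a convex set $\mathcal D$ with $\mathcal D\neq\rr^n$ has $\bar{\mathcal D}\neq\rr^n$. Indeed, in finite dimensions the relative interior satisfies $\mathrm{ri}\,\bar{\mathcal D}=\mathrm{ri}\,\mathcal D\subseteq\mathcal D$. If $\bar{\mathcal D}=\rr^n$, then $\mathrm{ri}\,\bar{\mathcal D}=\rr^n$, which would force $\mathcal D=\rr^n$.

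Next I pick $z\notin\bar{\mathcal D}$ and strictly separate $z$ from the closed convex cone $\bar{\mathcal D}$. This yields $\mu\neq0$ and $c\in\rr$ with $\mu\cdot d\le c<\mu\cdot z$ for every $d\in\bar{\mathcal D}$. Because $\bar{\mathcal D}$ is a cone, I can replace $d$ by $td$ and let $t\to+\infty$. This forces $\mu\cdot d\le0$ for all $d\in\bar{\mathcal D}$. Setting $\lambda:=-\mu$ then gives $\lambda\cdot(k_1-k_2)\ge0$ for all $k_1\in\K_1$, $k_2\in\K_2$. Choosing $k_2=0$ gives $\lambda\in-\K_1^\bot$, and choosing $k_1=0$ gives $\lambda\in\K_2^\bot$. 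Since $\lambda\neq0$, the cones are linearly separable.

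The only delicate step is the non-closedness of $\mathcal D$, which is handled by the relative-interior argument above. An alternative is to reduce to the case where $\mathcal D$ spans a proper subspace: if $\mathrm{span}\,\mathcal D\neq\rr^n$, any nonzero $\lambda$ orthogonal to that subspace works. If instead $\mathcal D$ spans $\rr^n$, it has nonempty interior, and a point not in $\mathcal D$ can be separated via the supporting hyperplane theorem.
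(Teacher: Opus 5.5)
Your proof is correct and complete. Note that the paper never proves Proposition~\ref{teo3}: it recalls it, together with Proposition~\ref{teo2}, as elementary background on cones with a reference to Sussmann's work. So there is no in-paper argument to compare against, and yours is the standard one.

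You reduce everything to the single convex cone $\mathcal{D}=\mathcal{K}_1-\mathcal{K}_2$ and observe that $-\mathcal{K}_1^\bot\cap\mathcal{K}_2^\bot=-\mathcal{D}^\bot$. That identity uses $0\in\mathcal{K}_1\cap\mathcal{K}_2$, i.e.\ that the cones are nonempty. This is implicit in the statement; for $\mathcal{K}_1=\emptyset$ and $\mathcal{K}_2=\mathbb{R}^n$ the proposition would fail. You then prove $\mathcal{D}\neq\mathbb{R}^n \iff \mathcal{D}^\bot\neq\{0\}$.

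You correctly isolated the only delicate point, the possible non-closedness of $\mathcal{D}$, and the identity $\operatorname{ri}\bar{\mathcal{D}}=\operatorname{ri}\mathcal{D}$ handles it. The scaling step that turns the affine strict separation into $\mu\cdot d\le 0$ is also right. Nontriviality of $\mu$ is automatic, since $0\in\bar{\mathcal{D}}$ gives $\mu\cdot z>c\ge 0$.

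Proposition~\ref{teo2} plays no role in your argument, so you can drop the reference to it.

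If you want to avoid relative interiors, there is a more elementary way to see that a convex cone with $\bar{\mathcal{D}}=\mathbb{R}^n$ equals $\mathbb{R}^n$. Such a cone contains vectors arbitrarily close to $\pm e_1,\dots,\pm e_n$. Containing $0$ in the interior of the convex hull is stable under small perturbations of the points. Hence the conic hull of those nearby vectors, which lies in $\mathcal{D}$, is already all of $\mathbb{R}^n$.
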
 
	
	\subsection{Some generalized normal and tangent cones} \label{subGenCone}

	In this subsection we collect some  basic concepts  from nonsmooth analysis (see e.g. \cite[Chapters 1, 2]{ClarkeLed}).
	\vsmm
 We recall that, if   $\A\subset \rr^n$,   
 for any  $x\in \rr^n$  we  use $d_{\A}(x) $ to denote the {\it distance of $x$ from $\A$}, namely we set $d_{\A}(x):=\inf\{|x-y|:  \ y\in \A\}$.

  Let $\mathcal{S}\subset \rr^n$ be a closed, nonempty set and let $x\in  \rr^n$. We use  $\Pi_\mathcal{S}(x)$ to denote the {\em metric projection of $ x$ on $\mathcal{S}$}, i.e. the  set $$\Pi_\mathcal{S}( x):=\{y\in\mathcal{S}: \ | x-y|=d_{\mathcal{S}}(x)\}.$$ Notice that, since  $\mathcal{S}$ is closed (and nonempty), $\Pi_\mathcal{S}( x)\ne\emptyset$ for all $ x\in \rr^n$.

Let   $\bar x\in \mathcal{S}$.   The \textit{proximal normal cone $N^P_\mathcal{S}(\bar x)$ to $\mathcal{S}$ at $\bar x$} is defined as
\[
N^P_\mathcal{S}(\bar x)  := \left\{  \eta\in \rr^n \text{ : } \exists r>0 \,\,\text{ such that }\,\, \bar x\in  \Pi_\mathcal{S}(\bar x+r\eta)   \right\}. \footnotemark
\]
  \footnotetext{ Here and in the following, we are  using the convention of identifying either a tangent space  $T_{\bar x}\rr^n$   and a  cotangent  space $T^*_{\bar x}(\rr^n)$  with  $\rr^n$. In particular, $N^P_\mathcal{S}(\bar x)$, which is here defined as a subset of $\rr^n$, should be thought as a subset of $T^*_{\bar x}(\rr^n)$. }

One  can check that $\eta\in N^P_\mathcal{S}(\bar x)$ if and only if there exists some $r>0$ such that
\bel{N^Pi}
\eta \cdot (x-\bar x) \leq \frac{1}{2r} |x-\bar x|^2 \quad  \forall x \in \mathcal{S}.
\eeq
Furthermore, for any $y\in \rr^n$, one has
$
y-\bar x\in N^P_\mathcal{S}(\bar x)$ for all $\bar x\in \Pi_\mathcal{S}(y).
$
 Let us point out that if $S$ is a convex subset then, for every $\bar x\in S$,  $$N^P_\mathcal{S}(\bar x) = N^{conv}_\mathcal{S}(\bar x)\colon= \Big\{\eta\in  \rr^n\colon\,\,\,\, \eta\cdot(x-\bar x)\leq 0 \,\,\forall x\in \mathcal{S} \Big\},$$ i.e. the proximal normal cone $N^P_\mathcal{S}(\bar x)$ coincides with the {\it normal cone $ N^{conv}_\mathcal{S}(\bar x)$} of convex analysis. 
Finally, it is worth pointing out that the  concept of proximal normal cone is local, namely $N^P_\mathcal{S}(\bar x)=N^P_{\mathcal{S}\cap V}(\bar x)$  for any neighborhood $V$ in $\rr^n$ of $\bar x\in \mathcal{S}$. 

	The \textit{limiting normal cone} $N_\mathcal{S}(\bar x)$ to $\mathcal{S}$ at $\bar x$ is given by 
\[ 
N_\mathcal{S}(\bar x) := \left\{  \eta\in \rr^n \text{ : } \exists x_i \stackrel{\mathcal{S}}{\to} \bar x,\, \eta_i \to \eta \,\,\text{ s.t. }\,\,  \eta_i \in N^P_\mathcal{S}(x_i) \text{ for each } i\in \mathbb{N}     \right\},
\]
 in which the notation $x_{i} \stackrel{\mathcal{S}}{\longrightarrow}\bar{x}$ is used to indicate that the sequence $(x_i)_{i\in{\mathbb N}}$  converges to $\bar x$ and   $x_i\in\mathcal{S}$ $\forall i\in{\mathbb N}$.  The proximal normal cone $N^P_\mathcal{S}(\bar x)$ is convex, and   $\{0\}\subseteq N^P_\mathcal{S}(\bar x) \subseteq N_\mathcal{S}(\bar x)$,  while the limiting normal cone $N_\mathcal{S}(\bar x)$ is   not necessarily convex (see fig.1). On the other hand, the limiting cone    $N_\mathcal{S}(\bar x)$  is  closed, while  the proximal normal cone $N^P_\mathcal{S}(\bar x)$   may happen to be not closed. Furthermore, the set-valued map $x\rightsquigarrow N_\mathcal{S}(x)$ from $\mathcal{S}$ to the set of subsets of  $\left({\rr^n}\right)^*$  is upper semicontinuous \cite{vinter}.

Let us now recall the definitions of a Clarke tangent cone and a Clarke normal cone. The {\em Clarke tangent cone to  $\mathcal{S}$ at $\bar x$} \cite{ClarkeLed} is the set 
\bel{T^c}
T_\mathcal{S}^C(\bar x):=\left\{v\in  \rr^n: \\ \underset{t\downarrow 0, \, x\to \bar x}{\limsup} \, \frac{d_\mathcal{S}(x+t v)-d_\mathcal{S}(x)}{t}=0\right\}.
\eeq
The {\em Clarke normal cone to  $\mathcal{S}$ at $\bar x$}, denoted by $N_\mathcal{S}^C(\bar x)$,  is defined as the polar  of the Clarke tangent cone, namely $$N_\mathcal{S}^C(\bar x):=(T_\mathcal{S}^C (\bar x))^{\bot}.
$$
An equivalent ``sequential" definition of  $T_\mathcal{S}^C(\bar x)$ is as follows: an element $v\in  \rr^n$ belongs to  $T_\mathcal{S}^C(\bar x)$ if and only if, for every sequence $x_i   \stackrel{\mathcal{S}}{\to} \bar x$ and every sequence $t_i\downarrow 0$, there exists a sequence $(v_i)_i\subset   \rr^n$ converging to $v$ such that $x_i+t_iv_i\in \mathcal{S}$ for all $i$.

For the sequel, it is useful  to prove also the following, third, equivalent definition. 
\begin{lma}\label{Lem_T^c} Given  a closed, nonempty subset  $\mathcal{S}\subset \rr^n$  and a point  $\bar x\in \mathcal{S}$, we have
\bel{T^c_new}
T_\mathcal{S}^C(\bar x)=\left\{v\in  \rr^n: \ \ \lim_{t\downarrow 0, \, x\stackrel{\mathcal{S}}{\to} \bar x}\frac{d_\mathcal{S}(x+t v)}{t}=0\right\}.
\eeq
\end{lma}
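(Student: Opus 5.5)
We prove the two inclusions between the set defined in \eqref{T^c} (equivalently, the sequential characterization recalled above) and the set on the right-hand side of \eqref{T^c_new}, which we call $\widetilde T$.

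\emph{Inclusion $T^C_\mathcal{S}(\bar x)\subseteq \widetilde T$.} Let $v\in T^C_\mathcal{S}(\bar x)$. Restricting the $\limsup$ in \eqref{T^c} to base points $x\in\mathcal{S}$ only makes it smaller. For such $x$ we have $d_\mathcal{S}(x)=0$, so
$$\limsup_{t\downarrow 0,\ x\stackrel{\mathcal{S}}{\to}\bar x}\frac{d_\mathcal{S}(x+tv)}{t}\le 0 .$$
The quotient is also nonnegative, so the $\liminf$ is $\ge 0$, and the limit exists and equals $0$. Alternatively, the sequential characterization gives $v_i\to v$ with $x_i+t_iv_i\in\mathcal{S}$, hence $d_\mathcal{S}(x_i+t_iv)\le t_i|v-v_i|$, which again yields the limit $0$.

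\emph{Inclusion $\widetilde T\subseteq T^C_\mathcal{S}(\bar x)$.} Let $v\in\widetilde T$, and take arbitrary sequences $x_i\to\bar x$ (not necessarily in $\mathcal{S}$) and $t_i\downarrow 0$. We must show
$$\limsup_i \frac{d_\mathcal{S}(x_i+t_iv)-d_\mathcal{S}(x_i)}{t_i}\le 0.$$
The $\ge 0$ direction is automatic from the choice $x=\bar x$ (or any $x\in\mathcal{S}$), since $d_\mathcal{S}\ge0$. Pick $y_i\in\Pi_\mathcal{S}(x_i)$, which exists because $\mathcal{S}$ is closed. Since $|y_i-\bar x|\le |y_i-x_i|+|x_i-\bar x|\le 2|x_i-\bar x|$, we have $y_i\stackrel{\mathcal{S}}{\to}\bar x$. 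Because $d_\mathcal{S}$ is $1$-Lipschitz,
$$d_\mathcal{S}(x_i+t_iv)\le d_\mathcal{S}(y_i+t_iv)+|x_i-y_i| = d_\mathcal{S}(y_i+t_iv)+d_\mathcal{S}(x_i).$$
Therefore
$$\frac{d_\mathcal{S}(x_i+t_iv)-d_\mathcal{S}(x_i)}{t_i}\le \frac{d_\mathcal{S}(y_i+t_iv)}{t_i}\longrightarrow 0$$
by the definition of $\widetilde T$. Since the sequences were arbitrary, the $\limsup$ in \eqref{T^c} is $\le 0$, and combined with the lower bound it equals $0$, so $v\in T^C_\mathcal{S}(\bar x)$.

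The only delicate point is reducing general base points $x$ to points of $\mathcal{S}$. This is handled by the projection trick together with the Lipschitz estimate above, and it is where closedness of $\mathcal{S}$ is used.
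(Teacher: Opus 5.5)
Your proof is correct and follows essentially the paper's argument: the first inclusion comes from restricting the $\limsup$ to base points in $\mathcal{S}$, and the second from replacing a general base point $x_i$ by a nearby point $y_i\in\mathcal{S}$ and using the $1$-Lipschitz continuity of $d_\mathcal{S}$. The only differences are cosmetic: the paper argues by contradiction and takes an approximate projection with $|y_i-x_i|\le d_\mathcal{S}(x_i)+t_i/i$, which does not need closedness, whereas you argue directly with an exact projection $y_i\in\Pi_\mathcal{S}(x_i)$.
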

\begin{proof}  Preliminarily observe that for any $v\in\rr^n$ and for every $x\in \mathcal{S}$, 
since $d_\mathcal{S}(x)=0$, we have
\begin{equation}\label{L2.1_1}
0\le \limsup _{t\downarrow 0, \, x\stackrel{\mathcal{S}}{\to} \bar x}\frac{d_\mathcal{S}(x+t v)}{t}= \limsup _{t\downarrow 0, \, x\stackrel{\mathcal{S}}{\to} \bar x}\frac{d_\mathcal{S}(x+t v)-d_\mathcal{S}(x)}{t}
\le \limsup _{t\downarrow 0, \, x \to \bar x}\frac{d_\mathcal{S}(x+t v)-d_\mathcal{S}(x)}{t}.
\end{equation}

Take $v\in T_\mathcal{S}^C(\bar x)$. Since $\ds\limsup_{t\downarrow 0, \, x\to \bar x}\frac{d_\mathcal{S}(x+t v)-d_\mathcal{S}(x)}{t}=0$ by definition,  from \eqref{L2.1_1} it follows immediately that    
$$
0\le  \liminf _{t\downarrow 0, \, x\stackrel{\mathcal{S}}{\to} \bar x}\frac{d_\mathcal{S}(x+t v)}{t}\le \limsup _{t\downarrow 0, \, x\stackrel{\mathcal{S}}{\to} \bar x}\frac{d_\mathcal{S}(x+t v)}{t}=0.
$$
 Thus, there exists  $\ds\lim_{t\downarrow 0, \, x\stackrel{\mathcal{S}}{\to} \bar x}\frac{d_\mathcal{S}(x+t v)}{t}=0$. 

In contrast, fix $v\in  \rr^n$ satisfying
\bel{contrad}
\lim_{t\downarrow 0, \, x\stackrel{\mathcal{S}}{\to} \bar x}\frac{d_\mathcal{S}(x+t v)}{t}=0
\eeq
 and suppose, by contradiction, that $v\notin T_\mathcal{S}^C(\bar x)$. Then, there exist some sequences $(t_i)_i\subset(0,+\infty)$ and $(x_i)_i\subset  \rr^n$ such that $t_i\downarrow 0$, $x_i\to \bar x$, and 
$$
\lim_i\,\frac{d_\mathcal{S}(x_i+t_i v)-d_\mathcal{S}(x_i)}{t_i}=: \ell\ne 0,
$$
 where $\ell>0$ by \eqref{L2.1_1}. Then, 
for each $i\ge 1$, choose $y_i\in \mathcal{S}$ that satisfies
\bel{y_iS}
|y_i-x_i|\le d_\mathcal{S}(x_i)+\frac{t_i}{i}.
\eeq
Hence, we obtain a sequence $(y_i)_i\subset \mathcal{S}$ converging to $\bar x$ and  such that
$$
\begin{array}{l}
\ds\frac{d_\mathcal{S}(y_i+t_i v)}{t_i}=\frac{d_\mathcal{S}(y_i+t_i v) -d_\mathcal{S}(x_i+t_i v)+d_\mathcal{S}(x_i+t_i v)}{t_i} \\
\ds\qquad\qquad\qquad\ge \frac{-|y_i-x_i|+d_\mathcal{S}(x_i+t_i v)}{t_i} 
 \ge \frac{d_\mathcal{S}(x_i+t_i v)-d_\mathcal{S}(x_i)}{t_i}-\frac{1}{i},
\end{array}
$$
owing to \eqref{y_iS} and  the $1$-Lipschitz continuity of the distance function $d_\mathcal{S}$. Passing to the limit, we get $\lim_i\frac{d_\mathcal{S}(y_i+t_i v)}{t_i}\ge\ell$, in contradiction to \eqref{contrad}.  
\end{proof} 

Let us conclude recalling some useful  relations   between the above cones. The Clarke tangent cone $T_\mathcal{S}^C(\bar x)$  to  $\mathcal{S}$ at $\bar x\in \mathcal{S}$  is a closed convex cone, which in turn is  the polar of the  Clarke normal cone $N_\mathcal{S}^C(\bar x)$. The latter and the limiting normal cone are related by the equality: \bel{polare_T^c}
(T_\mathcal{S}^C(\bar x))^{\bot}=N_\mathcal{S}^C(\bar x)=\overline{\text{co}\, N_\mathcal{S}(\bar x)}.
\eeq
 
\begin{figure}[h!]\centering\includegraphics[scale=0.3]{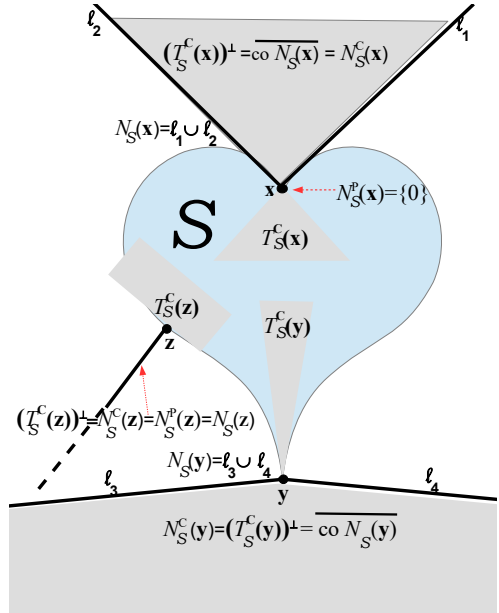}\caption{Tangent and normal cones}\end{figure}

\subsection{Prox-regular sets}\label{pr_sect}
This subsection is devoted to some basic facts on { the notion of} {\it prox-regularity},  which  will be used  in the next sections. For  more details
	on this topic, we refer, for instance, to \cite{PRT00}, the survey \cite{CT10},  and the references therein. It was Federer \cite{Fed59} who first introduced prox-regular sets in finite-dimensional spaces as {\em sets with positive reach}, in order to provide a generalization of both convexity and $C^2$-smoothness.
 
\begin{definition}\label{Def_prox-reg} Let $\mathcal{S}\subset\rr^n$ be a closed nonempty subset  and consider a real number  $r>0$.  One says that $\mathcal{S}$ is {\em $r$-prox-regular} if, for each $\bar x\in \mathcal{S}$,  for all $\eta\in N^P_\mathcal{S}(\bar x)$ such that $|\eta|=1$,  and for every real number $s\in(0,r]$,   one has $\bar x\in \Pi_\mathcal{S}(\bar x+s\eta)$.
\end{definition}

 \begin{remark} Given a  closed nonempty subset $\mathcal{S}\subset\rr^n$, $\bar x\in \mathcal{S}$ and $\eta\in N^P_\mathcal{S}(\bar x)$ such that $|\eta|=1$, it is easy to show  that for any $s>0$ one has
\bel{N^Piii}
		\bar x\in \Pi_\mathcal{S}(\bar x+s\eta)\ \ \Longleftrightarrow  \ \mathcal{S}\cap B_{s}(\bar x+s\eta)=\emptyset.
\eeq
\end{remark}

 The following proposition states some  characterizations of $r$-prox-regular sets,  for which we refer to \cite{PRT00,CT10}.

 \begin{prop}\label{Prop_prox_reg} Given a nonempty closed subset  $\mathcal{S}$ of $ \rr^n$ and $r>0$, the following properties are equivalent.
 
 \begin{itemize}
 \item[{\rm (i)}] $\mathcal{S}$ is $r$-prox-regular;
 \item[{\rm (ii)}]  for all $x$, $x'\in \mathcal{S}$, for every $\eta\in N^P_\mathcal{S}(x)$, one has
 $$
 \eta\cdot(x'-x)\le \frac{1}{2r}|\eta|\,|x'-x|^2;
 $$

\item[{\rm (iii)}]  on the set $\mathcal{U}_r:=\left\{x\in  \rr^n: \ d_\mathcal{S}(x)<r\right\}$,  the projection  $\Pi_\mathcal{S}$ is univalued.  Moreover, one has that for every $s\in(0,r)$,
$$
\big|\Pi_\mathcal{S}(x)-\Pi_\mathcal{S}(y)\big|\le \frac{1}{1-(s/r)}|x-y|,  \quad \text{for all} \ x,y\in \mathcal{U}_s
$$
(thus, $\Pi_\mathcal{S}$ is a  Lipschitz continuous function on any $\mathcal{U}_s$, $s<r$); 
 \item[{\rm (iv)}]  for every $x\in \mathcal{U}_r\setminus  \mathcal{S}$, one has that ($\Pi_S$ is univalued and)
 $$
 \Pi_\mathcal{S}(x)=\Pi_\mathcal{S}\left(\Pi_\mathcal{S}(x)+s\frac{x-\Pi_\mathcal{S}(x)}{|x-\Pi_\mathcal{S}(x)|}\right), \quad\text{for any $s\in[0,r)$;}
 $$
  \item[{\rm (v)}] on the set $\mathcal{U}_r$ the function $d^2_\mathcal{S}$ is differentiable  with locally Lipschitz derivative, and
  $$
  \nabla   d^2_\mathcal{S}(x)=2(x-\Pi_\mathcal{S}(x)),  \quad \text{for all} \ x\in \mathcal{U}_r.
  $$
 \end{itemize}
  \end{prop}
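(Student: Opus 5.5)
The plan is to close the cycle (i)$\Rightarrow$(ii)$\Rightarrow$(iii)$\Rightarrow$(v)$\Rightarrow$(iv)$\Rightarrow$(i), reducing everything to elementary inequalities between proximal normals wherever possible. Since the statement is the standard one from \cite{PRT00,CT10}, the aim is a self-contained route in which only the step that extends a projection along a normal ray needs a genuine argument.

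\emph{(i)$\Leftrightarrow$(ii).} This is pure algebra. By \eqref{N^Piii}, for a unit $\eta\in N^P_\mathcal{S}(x)$, the inclusion $x\in\Pi_\mathcal{S}(x+r\eta)$ holds iff $|x'-x-r\eta|^2\ge r^2$ for all $x'\in\mathcal{S}$. Expanding the square, this is exactly $\eta\cdot(x'-x)\le\frac1{2r}|x'-x|^2$. Rescaling an arbitrary $\eta\neq0$ to unit length then gives (ii), and the case $\eta=0$ is trivial. Conversely, (ii) with $r$ replaced by any $s\le r$ is still valid, so it yields $x\in\Pi_\mathcal{S}(x+s\eta)$ for every $s\in(0,r]$, which is (i).

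\emph{(ii)$\Rightarrow$(iii).} Take $x,y\in\mathcal U_s$, $s<r$, and points $p\in\Pi_\mathcal{S}(x)$, $q\in\Pi_\mathcal{S}(y)$. Then $x-p\in N^P_\mathcal{S}(p)$ and $y-q\in N^P_\mathcal{S}(q)$. Applying (ii) at $p$ with $x'=q$ and at $q$ with $x'=p$, and adding, gives
$$
(x-y)\cdot(q-p)+|p-q|^2\le \frac{|x-p|+|y-q|}{2r}|p-q|^2\le \frac sr|p-q|^2 .
$$
Hence $(1-s/r)|p-q|\le|x-y|$. Taking $x=y$ shows that the projection is univalued on $\mathcal U_r$, and the general case gives the Lipschitz bound.

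\emph{(iii)$\Rightarrow$(v) and (iv)$\Rightarrow$(i).} For (iii)$\Rightarrow$(v), since the projection is single-valued and continuous, I would show directly that $d_\mathcal{S}^2(z)-d_\mathcal{S}^2(x)-2(x-\Pi_\mathcal{S}(x))\cdot(z-x)=o(|z-x|)$. The upper bound follows by testing with the point $\Pi_\mathcal{S}(x)$. The lower bound follows by testing with $\Pi_\mathcal{S}(z)$ and using the continuity of $\Pi_\mathcal{S}$ at $x$. The Lipschitz continuity of the gradient $2(x-\Pi_\mathcal{S}(x))$ is then inherited from (iii). For (iv)$\Rightarrow$(i), let $\eta$ be a unit proximal normal at $\bar x$, so that $\bar x\in\Pi_\mathcal{S}(\bar x+t\eta)$ for some $t\in(0,r)$. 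Applying (iv) at $x=\bar x+t\eta$ gives $\bar x=\Pi_\mathcal{S}(\bar x+s\eta)$ for all $s<r$. The case $s=r$ follows by letting $s\uparrow r$, since the condition $\mathcal{S}\cap B_s(\bar x+s\eta)=\emptyset$ passes to the union of these balls, which is $B_r(\bar x+r\eta)$.

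\emph{(v)$\Rightarrow$(iv): the main obstacle.} Fix $x\in\mathcal U_r\setminus\mathcal S$, set $p=\Pi_\mathcal{S}(x)$ and $\nu=(x-p)/|x-p|$, and let $A=\{s\in[0,r):\Pi_\mathcal{S}(p+s\nu)=p\}$. The set $A$ contains $[0,|x-p|]$, is an interval (sub-segments of a segment to a nearest point remain nearest), and is relatively closed by continuity of $\Pi_\mathcal{S}$. It remains to show that $A$ is open. On $\mathcal U_r\setminus\mathcal S$ the function $d_\mathcal{S}$ is $C^1$ with $\nabla d_\mathcal{S}(z)=(z-\Pi_\mathcal{S}(z))/d_\mathcal{S}(z)$, a unit vector field. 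I would consider the integral curve of this field through $p+s_0\nu$, for $s_0\in A$. Along that curve $d_\mathcal{S}$ grows with unit speed, so the curve is $1$-Lipschitz with $d_\mathcal{S}$ increasing at rate $1$. By Lipschitz uniqueness of the ODE under (v), the curve must coincide with the straight ray $p+s\nu$. This gives $d_\mathcal{S}(p+s\nu)=s$ slightly beyond $s_0$, hence $s\in A$. If this ODE argument proves delicate, the fallback is Federer's original argument via the monotonicity of $s\mapsto d_\mathcal{S}(p+s\nu)-s$.
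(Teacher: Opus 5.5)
The paper does not prove this proposition; it quotes it from \cite{PRT00,CT10}, so your cycle (i)$\Leftrightarrow$(ii)$\Rightarrow$(iii)$\Rightarrow$(v)$\Rightarrow$(iv)$\Rightarrow$(i) has to stand on its own. Most of it does. The following steps are correct:
\begin{itemize}
\item (i)$\Leftrightarrow$(ii), by expanding $|x'-x-s\eta|^2\ge s^2$;
\item (ii)$\Rightarrow$(iii), by adding the two inequalities at $p\in\Pi_{\mathcal S}(x)$ and $q\in\Pi_{\mathcal S}(y)$;
\item (iii)$\Rightarrow$(v), by testing with $\Pi_{\mathcal S}(x)$ and $\Pi_{\mathcal S}(z)$;
\item (iv)$\Rightarrow$(i), by shrinking the proximal radius below $r$ and exhausting $B_r(\bar x+r\eta)$ by the increasing balls $B_s(\bar x+s\eta)$.
\end{itemize}
In (v)$\Rightarrow$(iv), your facts that $A$ is an interval containing $[0,|x-p|]$ and is relatively closed in $[0,r)$ are also fine.

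The gap is in the step you yourself flagged: the openness of $A$. The claim that ``by Lipschitz uniqueness of the ODE the curve must coincide with the straight ray'' is circular. Uniqueness only identifies two curves that are \emph{both} integral curves of $\nabla d_{\mathcal S}$. The ray $s\mapsto p+s\nu$ is an integral curve exactly where $\nabla d_{\mathcal S}(p+s\nu)=\nu$, i.e.\ where $\Pi_{\mathcal S}(p+s\nu)=p$, i.e.\ on $A$ itself. So uniqueness only says that the integral curve through $z_0:=p+s_0\nu$ runs \emph{backwards} along the segment to $p$. It says nothing about $s>s_0$, which is what you need.

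Your fallback does not help either. The map $s\mapsto d_{\mathcal S}(p+s\nu)-s$ is nonincreasing for every closed set, since $d_{\mathcal S}$ is $1$-Lipschitz. Its monotonicity therefore cannot detect where the ray stops being normal.

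The missing idea is metric rigidity, and you already listed its ingredients. Let $\gamma$ be any integral curve with $\gamma(0)=z_0$; existence needs only continuity of $\nabla d_{\mathcal S}$, not uniqueness. Then $d_{\mathcal S}(\gamma(t))=s_0+t$ and $|\gamma(t)-z_0|\le t$, so
\[
s_0+t=d_{\mathcal S}(\gamma(t))\le|\gamma(t)-p|\le|\gamma(t)-z_0|+|z_0-p|\le t+s_0 .
\]
Equality throughout gives two things:
\begin{itemize}
\item $p\in\Pi_{\mathcal S}(\gamma(t))$, hence $\Pi_{\mathcal S}(\gamma(t))=p$ by univaluedness;
\item equality in the triangle inequality, which together with $s_0\ge|x-p|>0$ forces $\gamma(t)=p+(s_0+t)\nu$.
\end{itemize}
Hence $[s_0,s_0+\varepsilon)\subset A$.

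An alternative that avoids ODEs: apply Brouwer's theorem to $w\mapsto z_0+\varepsilon\,(w-\Pi_{\mathcal S}(w))/d_{\mathcal S}(w)$ on $\bar B_\varepsilon(z_0)$, for small $\varepsilon$. A fixed point $w$ has $z_0$ on the segment $[\Pi_{\mathcal S}(w),w]$, which forces $\Pi_{\mathcal S}(w)=p$ and $w=p+(s_0+\varepsilon)\nu$. This uses only continuity of $\Pi_{\mathcal S}$.

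With either repair your cycle is complete.
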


	\section{QDQ approximating cones and  Quasi   Prox Regularity}\label{S_T_QDQ}
	\subsection{Quasi Differential Quotients {\rm (QDQ)} and conic approximation} In this subsection we recall the definition --introduced in \cite{PR} (see also \cite{APR25, AR25})-- of {\it Quasi Differential Quotient  } (QDQ) of a set-valued map  and the corresponding notion of {\it QDQ approximating cone} to a subset $\mathcal E\subset\rr^n$ 
	   at a  point of $\mathcal E$.

	As customary, we  call a  function   $\rho : \cR_+\to\cR_+$   a   {\it modulus}   if it  is monotonically nondecreasing and
 	$\ds\lim_{s\to 0^+}\rho(s) =\rho(0)= 0$.

	\begin{definition}
	\label{qdq}
		Let $G : \cR^N \rightsquigarrow \cR^n$     be a 
		set-valued map (for some positive integers $n,N$), and consider  $(\bar  \xi,\bar y) \in \cR^N\times\cR^n  $. Let $\Lambda\subset Lin(\cR^N, \cR^n)$  be  a compact set and let $\Gamma\subset\cR^N$  be  any  subset.
		We
		say that $\Lambda$ is a {\rm Quasi Differential Quotient  (QDQ) of $G$ at  $(\bar  \xi,\bar y)$  in the direction of $\Gamma$}  if   there
		exist  a modulus $\rho$ and some $\bar\delta>0$ enjoying  the property that,
		for any $\delta\in[0,\bar\delta]$,  there is a continuous map  
		$(L_\delta,h_\delta):   B_{\delta}(\bar{ \xi})\cap\Gamma  \to Lin(\cR^N, \cR^n) \times \cR^n$ 
		such that, for all $ \xi
		\in   B_\delta(\bar{ \xi})\cap \Gamma$,\footnote{Notice that we are not assuming that $\bar \xi\in \Gamma$, so that it may well happen that  $\bar y\notin G(\bar\xi)$.}
		$$
		\bar y +  L_\delta ( \xi)\cdot( \xi-\bar  \xi)  + h_\delta( \xi)\in G( \xi), \quad \min_{L'\in\Lambda}|L_\delta( \xi) - L'|\leq \rho(\delta), 
		\quad |h_\delta( \xi)|\leq \delta \rho(\delta).  
		$$
	\end{definition}
\begin{remark}\label{Rem_QDQ}
Using the notations of Definition \ref{qdq}, one can observe that,  if for some $\bar\delta>0$ there is a continuous map $(L,h):  B_{\bar\delta}(\bar{ \xi})\cap\Gamma  \to Lin(\cR^N, \cR^n) \times \cR^n$  
		such that
		\bel{C_lin}
		\bar y +  L  ( \xi)\cdot( \xi-\bar  \xi)  + h ( \xi)\in G( \xi),   \quad \text{for all }  \xi
		\in   B_{\bar\delta}(\bar{ \xi})\cap \Gamma
		\eeq
and
\bel{C_sup}
\sup_{ \xi\in B_{\delta}(\bar{ \xi})\cap \Gamma}\min_{L'\in\Lambda}|L ( \xi) - L'|\to 0, 
		\quad \frac{\sup_{ \xi\in B_{\delta}(\bar{ \xi})\cap \Gamma} |h ( \xi)|}{\delta}\to0 \quad \text{as} \ \delta\to0^+, 
		\eeq
		then  $\Lambda$ is a QDQ of $G$  at  $(\bar  \xi,\bar y)$  in the direction of $\Gamma$ (see \cite[Remark 2.2]{APR25}).
		\end{remark}
		


	

\begin{definition}
	\label{ApprCone}    Consider a subset  $\mathcal S\subset\rr^n$ and $\bar x\in \mathcal S$. A
	convex cone $\K\subset\rr^n$ is called a   {\rm  QDQ approximating cone  to $\mathcal S$ at $\bar x$} if there exist an integer   $N\ge0$, a set-valued
		map  $G : \cR^N  \rightsquigarrow \rr^n$, a convex cone $\Gamma\subset\cR^N$, and a  Quasi Differential Quotient  $\Lambda:=\{L\}$  for some $L\in \text{Lin}(\cR^N,\cR^n)$ of $G$  at  $(0,\bar x)$ in the direction of $\Gamma$ such that $G(\Gamma)\subset \mathcal S$ and $\K =
		L\cdot\Gamma$. 
		We say that such a {\rm triple $(G,\Gamma,\Lambda)$  generates the  cone $\K$}.

	\end{definition}

{}
	
	
For instance, the  classical  Boltyanski approximating cone  is a  special case of QDQ approximating cone (as well as other well-known approximating cones, see e.g. \cite{APR25})
 
	\subsection{Quasi Prox-Regularity and Clarke tangent cones}

{\begin{definition}\label{defprox}  Let  $\mathcal{S}\subset\cR^n$ be a nonempty, closed subset. Fix $\bar x\in \mathcal{S}$.  We say that $\mathcal{S}$ is {\rm Quasi Prox-Regular at $\bar x$} if there exists a real number $\delta>0$   such that  either
	\begin{itemize} \item[{\rm (i)}] $(\bar x+T^C_\mathcal{S}(\bar x))\cap B_\delta(\bar x)\subseteq \mathcal{S}$ or \item[{\rm (ii)}]  for some $r>0$ there exists 
 an $r$-prox regular set $\tilde{\mathcal{S}}$ such that $\mathcal{S}\cap  B_\delta(\bar x) = \tilde{\mathcal{S}}\cap  B_\delta(\bar x)$.\end{itemize}
 \end{definition}


%
\begin{thm}\label{Th_T^C} For some  $n\in{\mathbb N}\backslash\{0\}$,  let $\mathcal{S}\subset\cR^n$ be a nonempty, closed subset, and  fix  $\bar x\in \mathcal{S}$. If $\mathcal{S}$ is { Quasi Prox-Regular at $\bar x$}, then the Clarke tangent cone $T^C_\mathcal{S}(\bar x)$ to $\mathcal{S}$ at $\bar x$ is a QDQ approximating cone to $\mathcal{S}$ at $\bar x$.
\end{thm}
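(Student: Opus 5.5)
We take $N:=n$, $\Gamma:=T^C_\mathcal{S}(\bar x)$ (a closed convex cone in $\rr^n$), and $\Lambda:=\{\mathrm{Id}\}$, so that $L\cdot\Gamma=T^C_\mathcal{S}(\bar x)$. It then suffices to build a set-valued map $G:\rr^n\rightsquigarrow\rr^n$ with $G(\Gamma)\subset\mathcal{S}$ such that $\{\mathrm{Id}\}$ is a QDQ of $G$ at $(0,\bar x)$ in the direction of $\Gamma$. We will verify the QDQ property through Remark \ref{Rem_QDQ}, taking $L(\xi)\equiv \mathrm{Id}$. The whole task is therefore to produce a continuous map $h$ on $B_{\bar\delta}\cap\Gamma$ such that $\bar x+\xi+h(\xi)\in\mathcal{S}$ and $\sup_{|\xi|<\delta,\ \xi\in\Gamma}|h(\xi)|=o(\delta)$. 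Outside $\Gamma$, or for large $\xi$, we simply put $G(\xi):=\mathcal{S}$; on $B_{\bar\delta}\cap\Gamma$ we put $G(\xi):=\{\bar x+\xi+h(\xi)\}$.

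\emph{Case (i).} Here we take $h\equiv 0$ and $\bar\delta=\delta$. By hypothesis $\bar x+\xi\in\mathcal{S}$ for all $\xi\in\Gamma\cap B_\delta$, so Remark \ref{Rem_QDQ} applies trivially.

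\emph{Case (ii).} We set $h(\xi):=\Pi_{\tilde{\mathcal{S}}}(\bar x+\xi)-(\bar x+\xi)$. By Proposition \ref{Prop_prox_reg}(iii), $\Pi_{\tilde{\mathcal{S}}}$ is single valued and Lipschitz on $\mathcal{U}_{r/2}$. For $|\xi|$ small we have $d_{\tilde{\mathcal S}}(\bar x+\xi)\le|\xi|<r/2$, so $h$ is continuous. Moreover $|\Pi_{\tilde{\mathcal S}}(\bar x+\xi)-\bar x|\le 2|\xi|$, hence for $\bar\delta<\delta/2$ the projected point lies in $\tilde{\mathcal{S}}\cap B_\delta(\bar x)=\mathcal{S}\cap B_\delta(\bar x)$. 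This gives $G(\xi)\subset\mathcal{S}$. It remains to show
\[
\sup_{\xi\in\Gamma,\ |\xi|\le\delta'} d_{\tilde{\mathcal S}}(\bar x+\xi)=o(\delta'),\qquad |h(\xi)|=d_{\tilde{\mathcal S}}(\bar x+\xi).
\]
Since Clarke cones are local, $\Gamma=T^C_{\tilde{\mathcal S}}(\bar x)$. By Lemma \ref{Lem_T^c} with base point $x=\bar x$, for each fixed $v\in\Gamma$ we get $d_{\tilde{\mathcal S}}(\bar x+tv)/t\to0$ as $t\downarrow0$.

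\emph{Main obstacle: uniformity in the direction.} Writing $\xi=tv$ with $|v|=1$ and $t=|\xi|$, we need $d_{\tilde{\mathcal S}}(\bar x+tv)/t\to0$ uniformly in $v\in\Gamma\cap\partial B_1$. The functions $\phi_t(v):=d_{\tilde{\mathcal S}}(\bar x+tv)/t$ are $1$-Lipschitz in $v$, uniformly in $t$, and converge pointwise to $0$ on a compact set. An equicontinuity (Arzel\`a--Ascoli / Dini-type) argument then upgrades pointwise to uniform convergence: cover $\Gamma\cap\partial B_1$ by finitely many $\varepsilon$-balls and use the Lipschitz bound. Note that this step uses only $\Gamma=T^C_{\mathcal S}(\bar x)$ and Lemma \ref{Lem_T^c}; prox-regularity is needed solely for the continuity of the correction $h$, since a merely measurable selection of the projection would violate the continuity requirement in Definition \ref{qdq}. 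With both properties of $h$ in hand, \eqref{C_lin}--\eqref{C_sup} hold and the theorem follows.
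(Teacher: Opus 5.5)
Your proof is correct and follows essentially the same route as the paper: the identity as the QDQ in the direction $\Gamma=T^C_\mathcal{S}(\bar x)$, the unique continuous metric projection (onto $\tilde{\mathcal S}$ in case (ii)) as the selection, Remark~\ref{Rem_QDQ} to reduce everything to $\sup_{\xi\in B_\delta\cap\Gamma}|h(\xi)|=o(\delta)$, and a compactness argument on $\Gamma\cap\partial B_1$ to make the directional limit from Lemma~\ref{Lem_T^c} uniform. The differences are minor. You obtain uniformity from the equi-Lipschitz continuity of $v\mapsto d_{\tilde{\mathcal S}}(\bar x+tv)/t$ and a finite cover, rather than the paper's contradiction/subsequence argument; this also shows that the paper's use of the $2$-Lipschitz bound on the projection in that step is unnecessary. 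You also check explicitly, via $|\Pi_{\tilde{\mathcal S}}(\bar x+\xi)-\bar x|\le 2|\xi|$, that the projected point lies in $\mathcal S$, which the paper leaves as ``easy to check''.
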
 
\begin{proof}  
Up to a translation, it is not restrictive to assume that $\bar{x}=0$.  Define the set-valued map $G : \cR^n  \rightsquigarrow\cR^n$, given by
$$
G(x):=
 \Pi_\mathcal{S}(x), \quad \text{for all} \ x\in  \cR^n.
$$
Consider the closed, convex cone $\Gamma:=T^C_\mathcal{S}(0)\subset\cR^n$ and set $\Lambda=\{L\}$ with $L:=\mathbb{1}_n$, namely the identity matrix on $\cR^n$.   We claim that proving that $\K:=T^C_\mathcal{S}(0)$ is  a QDQ approximating cone to $\mathcal{S}$ at $0$ reduces to verifying that 
{\it \begin{itemize}
\item[{\rm (A)}] there exist some $\bar \delta>0$  and a continuous  selection $B_{\bar\delta}\cap T^C_\mathcal{S}(0): {x\to} \alpha(x)\in G(x)$, whereby the (continuous) function $h:B_{\bar\delta}\cap T^C_\mathcal{S}(0)\to \cR^n$, defined as 
$$
h(x):=\alpha(x)-x,
$$
for each $\delta\in(0,\bar\delta]$, satisfies
\bel{A_h}
\frac{\ds\sup_{x\in B_{\delta}\cap T^C_\mathcal{S}(0)}|h(x)|}{\delta}\to 0 \quad \text{as $\delta\to0^+$} .
\eeq
\end{itemize}}
Indeed,  assuming (A) to be valid, we have that, for all $x\in B_{\bar\delta}\cap T^C_\mathcal{S}(0)$,
\bel{Lin_T^C}
0+L\cdot x +h(x)=\mathbb{1}_n\cdot x+\alpha(x)-x=\alpha(x)\in G(x).
\eeq
Therefore, owing to Remark \ref{Rem_QDQ}, $\Lambda=\{L\}$ is a QDQ  of $G$ at $(0,0)$ in the direction of  $\Gamma$, since \eqref{Lin_T^C}, \eqref{A_h} easily imply \eqref{C_lin} and \eqref{C_sup},  respectively, as soon as $h$ is as in (A) and $L\equiv \mathbb{1}_n$.  To conclude that the convex cone $\K=T^C_\mathcal{S}(0)$ is a QDQ approximating cone to $\mathcal{S}$ at $0$, it remains only to show that  
$G(\Gamma)\subset \mathcal S$ and $\K =
		L\cdot\Gamma$.
 But these conditions are trivially satisfied  by definition, as, in fact,  $G(x)=\Pi_{\mathcal{S}}(x)\subseteq {\mathcal{S}}$ for all $x \in\cR^n$, while   $\K =
		L\cdot\Gamma$  now reads $T^C_\mathcal{S}(0) =\mathbb{1}_n\cdot T^C_\mathcal{S}(0)$. 
		
The proof is thus concluded in the case in which we show that property (A) holds true at $0$ in $\mathcal{S}$  if either condition {\rm (i)} or condition {\rm (ii)} of Definition \ref{defprox} is satisfied.

In case condition  {\rm (i)} is met, namely, if $B_{\bar\delta}\cap T^C_\mathcal{S}(0)\subset \mathcal{S}$   for some $\bar\delta>0$, property (A) is trivially verified. Indeed $\Pi_\mathcal{S}(x)=\{x\}$ for all $x\in B_{\bar\delta}\cap T^C_\mathcal{S}(0)$, i.e. there exists an unique continuous selection  $\pi(x)=x$,  for every $x\in B_{\bar\delta}\cap T^C_\mathcal{S}(0)$. So $h(x)=\pi(x)-x= 0$ for all $x\in B_{\bar\delta}\cap T^C_\mathcal{S}(0)$.

Assume now {\rm (ii)}, namely   there exist some $\hat\delta,r>0$,  and a $r$-prox-regular set $\tilde{\mathcal{S}}\subset\cR^n$ such that $\mathcal{S}\cap  B_{\hat\delta}=\tilde{\mathcal{S}}\cap  B_{\hat\delta}$. By Proposition \ref{Prop_prox_reg}, for any  $\bar r\in(0,r)$   the projection  $\Pi_{\tilde{\mathcal{S}}}$ is univalued  and Lipschitz continuous on the set $\mathcal{U}_{\bar r}=\left\{x\in  \tilde{\mathcal{S}}: \ d_{\tilde{\mathcal{S}}}(x)<\bar r\right\}$.  Take $\bar r:=\min\{r/2,\hat\delta/2\}$ and set $\bar\delta:=\bar r/2$. As it is easy to check, we have that   $T^C_\mathcal{S}(0)=T^C_{\tilde{\mathcal{S}}}(0)$ and $\Pi_{\mathcal{S}}(x)=\Pi_{\tilde{\mathcal{S}}}(x)$ for all $x\in  B_{\bar\delta}$.  As a consequence, { on $ B_{\bar\delta}\cap T^C_\mathcal{S}(0)$ ($\subset \mathcal{U}_{\bar r}$), there exists a trivial  unique, Lipschitz continuous,  selection  $\alpha(x)\in \{\alpha(x)\} = \Pi_{\mathcal{S}}(x)$ },   so   the function $h(x)=\alpha(x)-x$ is well-defined and continuous on this set. To prove that $h$ satisfies condition  \eqref{A_h}, for any $\delta\in(0,\bar\delta]$, let $x_\delta\in  \overline{B_{\delta}}\cap T^C_\mathcal{S}(0)$ be the point such that 
$$
|h(x_\delta)|=\sup_{x\in B_{\delta}\cap T^C_\mathcal{S}(0)}|h(x)|,
$$
which exists  since $h$ is continuous and   $T^C_\mathcal{S}(0)\cap\overline{B_\delta} $  is compact.  Suppose, by contradiction, that there exists a sequence $\delta_i\downarrow 0$ as $i\to+\infty$, such that 
\bel{contraAh}
\lim_i \frac{|h(x_i)|}{\delta_i}=\lim_i \frac{|\pi(x_i)-x_i|}{\delta_i}=\ell>0,
\eeq
where  $x_i:=x_{\delta_i}$.  Since $x_i\in B_{\delta_i}\cap T^C_\mathcal{S}(0)$,  we  have that   for each $i$,  
$$
x_i=t_i\,v_i,
$$
 for some $t_i\in(0,\delta_i]$ \footnote{  Possibly passing  to a subsequence, we can assume   $t_i>0$.}  and  $v_i\in T^C_\mathcal{S}(0)$ with $|v_i|=1$.  Possibly passing   to a subsequence (we do not relabel), there exists some $v\in  T^C_\mathcal{S}(0)$ with $|v|=1$ such that $v_i\to v$.   We get
 $$
 \begin{array}{l}
|h(x_i)|=|\alpha(x_i)-x_i| =|\alpha(t_i\,v_i)-t_i\,v_i|\le |\alpha(t_i\,v)-t_i\,v| \\
\ \quad\qquad\qquad\qquad\qquad +|\alpha(t_i\,v_i)-t_i\,v_i-\alpha(t_i\,v)+t_i\,v| \\
\ \quad\qquad\qquad\qquad\qquad\le d_{\mathcal{S}}(t_i\,v)+3t_i|v-v_i|,
\end{array}
 $$
 where we have used  the $2$-Lipschitz continuity of $\alpha$ (which follows from Proposition \ref{Prop_prox_reg}  (iii), being $\bar r\le r/2$). Then, from the characterization  \eqref{T^c_new} of the Clarke tangent cone (and the fact that $t_i\le\delta_i$) it follows that 
 $$
 \frac{|h(x_i)|}{\delta_i}\le \frac{d_{\mathcal{S}}(t_i\,v)}{t_i}+3|v-v_i| \to 0 \quad \text{as $i\to+\infty$,}
 $$
 in contradiction to \eqref{contraAh}. 
\end{proof}

	\section{Impulsive extension of a minimum problem  and infimum gap phenomena}\label{Sgap}  
	
	 In this Section we  begin  the application of   the material discussed in the previous sections  to the study of gap phenomena in impulsive optimal control. 
		\subsection{An unbounded control problem and its impulsive extension}
We  will consider an optimal control problem on $\rr^n$ ($n\geq 1$) with a  (space-time)  endpoint constraint, a  closed subset  $\cS\subseteq[0,+\infty)\times \rr^n$, which we  call the   {\it target}. The  {\it cost} is a function $\h:[0,+\infty)\times \rr^n\to \rr$, while the  set $\U$ of (unbounded) {\it strict-sense controls} is defined as
  	$$\U:=\bigcup_{T>0} \left(\{T\}\times L^1\big([0,T] , \C \big)\right),$$
  where $\C\subseteq\cR^m $  is a closed cone. 
For any  strict-sense control $(T,u)\in\U,$ we call $(T,u,x,\vz)$ a {\it strict-sense process} if  $(x,\vz)$     is  the (unique)  Carath\'eodory solution on $[0,T]$ to the Cauchy problem 
\bel{E2}
\left\{\begin{array}{l}
\ds \frac{dx}{dt}(t) = f(x(t)) + \ds\sum_{i=1}^m  g_i(x(t)) {u^{i}}(t)\\
\ds\frac{d\vz}{dt}(t) = |u(t)|\qquad
\\ [1.5ex] 
(x,\vz)(0)=({\check x},0),\end{array}\right.
\eeq 
  where  $f$,  $g_1,\ldots,g_m$ are given vector fields.  At every $t\in [0,T]$, $\nu(t)$ coincides with the $L^1$-norm of the control $u$ on $[0,t]$, which   can be regarded as the  {\it energy } spent up to time $t$. We possibly  impose  an {\it energy  upper bound} $\nu(T)\leq K$, with $K\in [0,+\infty[\cup {+\infty}$  (and  $K=+\infty$ will  mean the absence of such a bound).  A strict-sense process $(T,u,x,\vz)$ is said {\em  feasible} if  $(T, x(T))\in\cS$ and $\vz(T)\le K$. 
  We will call 
 \begin{equation}\tag{P}\label{P}
		\ds\inf  \Big\{\h(T,x(T)): \quad (T,u,x,\vz) \ \text{ feasible strict-sense process}\Big\}.
\end{equation} the {\it original 
optimal control problem}.
Since the controls $u$ are unbounded and no  coerciveness  conditions avoid the occurrence of minimizing sequences of strict-sense trajectories   which converge  to  discontinuous paths (which  would impede the existence of a minimizer among strict-sense processes), following the {\it graph completion} approach
 (see e.g. \cite{BR,MR,MiRu,BP,GS,KDPS,AKP1,WZ}) we   embed problem \eqref{P}  into the  {\it extended  optimal control problem}
\begin{equation}
 \tag{\text{$\mathcal{P}_{\scriptsize e}$}}
\label{Pe}
	\inf \Big\{\h(y^0(S), y(S)): \ \ (S,w^0,w,y^0,y,\beta) \ \text{ feasible extended  process} \Big\},
\end{equation}	
where $(S,w^0,w, y^0,y,\beta)$ is  called {\it extended   process} when $(S,w^0,w)$ belongs to  the  set $\mathcal{W}$  of {\it extended   controls},  defined as 
$$
\mathcal{W}:=
\bigcup_{S>0}\left( \{S\}\times \Big\{(w^0,w)\in L^\infty ([0,S], \cR_+\times \C): \  w^0+|w|=1 \ \text{a.e.} 
  \Big\}\right),
$$
and  $(y^0,y,\beta)$ is  the unique  solution on $[0,S]$ to 
 \begin{equation}
 \label{extended}
 \left\{
\begin{split}
\frac{d{y^0}}{ds} (s) &= w^0(s),  \\
\frac{dy}{ds} (s) & = f( y(s) )w^0(s)+ \sum_{i=1}^{m}g_{i}( y(s))w^{i}(s),\\
\frac{d\beta}{ds} (s) & = |w(s)|, \\
 (y^0,y , &\beta)(0)=(0,\xbo,0).
\end{split}
\right.  \end{equation}
As in the original problem, an extended process,  $(S,w^0,w, y^0,y,\beta)$ is said {\it feasible} provided   $( y^0,y,\beta)(S)\in \T\times [0,K]$.
Notice that, by regarding  $y^0$ das a reparameterization of time $t$, the class of strict-sense processes can be   identified with the subclass of extended   processes  with $w^0>0$ almost everywhere.  The elements of this subclass are  referred to as   {\em embedded strict-sense processes}.  
 Precisely, given a strict-sense process  $(T,u,x,\vz)$, through the time change  $y^0:=\sigma^{-1}$,  where $[0,T]\ni t\mapsto\sigma(t):=t+\vz(t)$, we   define the associated embedded strict-sense process $(S,w^0,w, y^0,y,\beta)$   as follows:
 \bel{seq}
(S,w^0,w, y^0,y,\beta):=\left(\sigma(T),  \frac{dy^{0}}{ds}, (u\circ y^{0})\cdot \frac{dy^{0}}{ds},  y^{0},x\circ y^{0}, \va\circ y^{0}\right).
\eeq
Conversely, given  any embedded strict-sense process  $(S,w^0,w ,y^0,y,\beta)$ we get the corresponding  strict-sense process $(T,u,x,\vz)$ using the time change {\it  $t\mapsto\sigma(t):=(y^0)^{-1}(t)$}.
This is a one-to-one correspondence from the set of strict-sense processes to the subset of embedded strict-sense processes. Within this bijection, {\it  a process  $(T,u,  x,\va)$ is feasible if and only if the associated process $(S,w^0,w, y^0,y,\beta)$ is feasible, and $\Psi(T,x(T))=\Psi(y^0(S),y(S))$}.   Therefore, the original minimum  problem  is equivalent to the following {\it embedded strict-sense  optimal control problem}
\begin{equation}
\tag{\text{$\mathcal{P}$}}\label{Ps}
	\inf \Big\{\h(y^0(S), y(S)): \ \ (S,w^0,w,y^0,y,\beta) \ \text{ feasible embedded strict-sense  process} \Big\}.
\end{equation}	
 The  above {\it impulsive extension}  ($\mathcal{P}_{\scriptsize e}$) of problem ($\mathcal{P}$) (i.e. of problem (P)) }consists in  allowing  the  time  derivative  $w^0$   to vanish on a set of positive measure. In particular, if   $I\subseteq [0,S]$ is an  interval on which $w^0$ vanishes,  at time  $t:=y^0(I)$  the state $y$ {\it evolves instantaneously}. \footnote{ Alternatively, one can provide an equivalent   $t$-based description of this extension using bounded variation trajectories, as it is done in   \cite{KDPS,AKP1,FM224}.}

	Throughout the paper,  we   assume the following hypotheses.  
	\begin{itemize}
		\item[{\rm (i)}]	{\it The vector fields  $f, g_1,\dots,g_m:\cR^n\to\cR^n$  are of class $C^1$ and bounded with their derivatives.}
		\item[{\rm (ii)}] {\it  The {\rm  final cost} $\h:\rr\times\rr^n\to\rr$ is of class  $C^1$. }
		\item[{\rm (iii)}]  {\it The control set $\C\subseteq\cR^m$ is a closed cone of the form
	$
	\mathcal{C}   = \mathcal{C} _1\times  \mathcal{C} _2 \subseteq \cR^{m_1}\times \cR^{m_2},
	$
	where  $(m_1,m_2)\in\cN^2$, $m_1+m_2=m$,    and, if $m_1\geq 1$,   $\mathcal{C} _1$ is a closed cone that  contains the lines $\{r{\bf e}_i:\ r\in\cR\}$, for $i=1,\ldots,m_1$,    while, if $m_2\geq 1$,       $\mathcal{C}_2$ is a  closed cone which does not contain any   straight line.\footnote{If $m_1=0$ [resp. $m_2=0$] we mean that 	$\mathcal{C}   = \mathcal{C} _2	$	[resp. 	$\mathcal{C}   = \mathcal{C} _1	$].}}
\end{itemize}

\begin{remark}  {\rm  The main  results of this paper are of local nature, so, given  a  feasible  extended  process $\bar z:=(\bar S,\bar w^0,\bar w , \bar y^0,\bar  y,\bar\beta)$,   they are still valid if the regularity hypothesis (i) is  replaced by the assumption that $f, g_1,\dots,g_m$ of class $C^1$ in an $L^\infty$-neighbourhood of the reference trajectory and, instead of (ii),  we suppose $\h$  differentiable in a neighbourhood of $(\bar y^0(\bar S),\bar y(\bar S))$  and with continuous derivative at  $(\bar y^0(\bar S),\bar y(\bar S))$.}  
	\end{remark} 
	
	\begin{remark}  {\rm  Hypothesis (iii) on the cone $\C$ is  not at all  restrictive. Indeed,  it can  be recovered by  replacing the single vector fields $g_i$ with suitable linear combinations of $\{g_1,\ldots,g_m\}$ and by considering a corresponding  linear transformation of coordinates in $\cR^m$.}  
	\end{remark} 
	
%
	
	
 We will consider the following distance  $\d$  between extended processes.
\begin{definition}\label{ddeff} 
		Let  $z_i:=(S_i, w^0_i, w_i , y^0_i,y_i,\beta_i)$, $i=1,2$,  be extended processes. We set 
		\begin{equation}\label{dL^1}
				\d\big(z_1,z_2\big):=    |S_1-S_2|+ \|(w^0_2, w_2)-(w^0_1, w_1)   \|_{L^1[0,S_1\wedge S_2]}.
		\end{equation} 
	\end{definition}

\begin{remark}\label{Rdinfty}  {\rm Given  an  extended  process $\bar z:=(\bar S,\bar w^0,\bar w , \bar y^0,\bar  y,\bar\beta)$,  by the  continuity of the input-output map it follows that for any $\varepsilon\in(0,1)$ there exists some $\delta>0$ such that,  for any extended process $z:=(S, w^0, w , y^0,y,\beta)$ such that  $\d\big(z,\bar z\big)<\delta$, one has  
$$
d_\infty\big(z,\bar z\big):=    |S-\bar S|+ \|( y^0,y,\beta)-(\bar y^0,\bar  y,\bar\beta)   \|_{L^\infty(\rr_+)}<\varepsilon,
$$
where  we extend $(y^0,y,\beta)$ and $(\bar y^0,\bar  y,\bar\beta)$     to $\rr_+$ by setting  $(y^0,y,\beta)(s)=(y^0,y,\beta)(S) $ for all $s>S$ and  $(\bar y^0,\bar  y,\bar\beta)(s)=(\bar y^0,\bar  y,\bar\beta)(\bar S)$   for all $s>\bar S$.}
\end{remark} 
	

	Let us  introduce two concepts of local infimum gap, depending on  whether we focus on the extended problem or the embedded strict-sense problem.
	\subsection{Extended processes with local infimum gap  are isolated processes}\label{SubIG}
	 Let us   use $X_\W$  to denote the set  of extended   processes $(S, w^0,w,y^0,y,\beta)$. Furthermore, let $X_{\W_+}\subset  X_\W$
	be the subset of {\it embedded strict-sense processes}, by which we mean those processes with controls in
	$$
	\W_+:=\{(S,w^0,w)\in\W: \ w^0>0 \ \text{a.e.}\}.
	$$
	Let us fix a feasible extended   process $ \hat z:=(\hat S,\hat w^0,\hat w,\hat y^0,\hat y,\hat \beta)$. 
	 For every    $r>0$,  the   sets  $\mathcal{R}^{r}_{\W_+}(\hat z)$, $\mathcal{R}^{r}_\W(\hat z) \subset \cR\times\N\times\cR$,  defined as 
 $$
		\begin{array}{l}
			\mathcal{R}^{r}_{\W_+}(\hat z)  := \Big\{(y^0,y,\beta)(S): \   z=(S, w^0,w,y^0,y,\beta)\in X_{\W_+}, 
			\ \d \left(z, \hat z\right)< r  \Big\},
			\\[1.0ex]
			\mathcal{R}_{\W}^{r}(\hat z):=  \Big\{(y^0,y,\beta)(S): \   z=(S, w^0,w,y^0,y,\beta)\in X_{\W}, 
			\ \d  \left(z, \hat z\right)< r  \Big\},
		\end{array}
		$$ 
	will be  called the   \textit{reachable set} and the \textit{extended  reachable set} (of radius $r$), respectively.
	The occurrence  of  a local infimum gap at an extended process is captured by the following definition: 
	\begin{definition}\label{gap-generator}  Let $\hat z =(\hat S,\hat w^0,\hat w,\hat y^0,\hat y,\hat \beta)$ be a  feasible extended  process. We say that 
		{\rm	there is a local  infimum  gap   at $\hat z$} if  
		there exists $r>0$ such that 
		\bel{igc} 
		\h\big(\hat{y}^0(\hat{S}),\hat{y}(\hat{S}) \big) <\inf_{(y^0,y,\beta)\in \mathcal{R}^{r}_{\W_+}(\hat z) \cap (\cS\times[0,K]) }\h(y^0,y).  
		\eeq 
	\end{definition}
  
 As noted observed in \cite{MRV}, the assumed continuity of the cost implies that  the local infimum  gap condition \eqref{igc} is, despite the name, a fully dynamic topological property, independent of the specific cost function $\Psi$.  Indeed, it just reflects the fact that there are no feasible embedded strict-sense processes in a sufficiently small $\d$-neighborhood  of the  reference  extended process $(\hat S,\hat w^0,\hat w,\hat y^0,\hat y,\hat \beta)$. To be more precise, let us observe that here we are adopting the control distance $\d$, whereas  \cite{MRV}   refers to the $L^\infty$-distance between trajectories. So, to make things rigorous,   let us introduce the notion of {\em isolated process} (with respect to a topology that is different from the one in    \cite{MRV})).
	
	\begin{definition}\label{Diso} A feasible extended    process    $\hat z=(\hat S,\hat w^0,\hat w,\hat \alpha,\hat y^0,\hat y,\hat \beta)$ is called {\rm isolated} if  $ \mathcal{R}^{r}_{\W_+}(\hat z) \cap (\cS\times[0,K])=\emptyset$ for some $r>0$.
    \end{definition} 
	
		 Arguing similarly to  the proof of  \cite[Prop. 2.1]{FM222},  we  get:
	 
	\begin{lma}\label{lemma infgap}   Let ${ \hat z} = (\hat S,\hat w^0,\hat w,\hat \alpha,\hat y^0,\hat y,\hat \beta)$ be a feasible extended   process.   The following   statements  are equivalent:
		\begin{itemize}
			\item [i)] 
		 there is a local  infimum  gap at $\hat z$ (according to Definition \ref{gap-generator});
			\item[ii)] 
			the process $\hat z$ is  isolated;
			\item[iii)]  there exists some $\hat r>0$ such that   $\ds \inf_{{ (y^0,y,\beta)}\in  \mathcal{R}^{r}_{\W_+}{( \hat z)}\cap (\cS\times[0,K])}\Phi(y^0,y) = +\infty$    for  every  continuous function $\Phi$ and every $r\in [0,\hat r]$.
		\end{itemize}
	\end{lma}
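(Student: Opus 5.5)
We prove the cycle of implications ii) $\Rightarrow$ iii) $\Rightarrow$ i) $\Rightarrow$ ii). Throughout, we use the convention $\inf\emptyset=+\infty$. We also note that the sets $\mathcal{R}^{r}_{\W_+}(\hat z)$ are nested, increasing in $r$, and that $\mathcal{R}^{0}_{\W_+}(\hat z)=\emptyset$, since $\d(z,\hat z)<0$ is impossible.

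\textbf{ii) $\Rightarrow$ iii).} If $\mathcal{R}^{\hat r}_{\W_+}(\hat z)\cap(\cS\times[0,K])=\emptyset$ for some $\hat r>0$, then by nestedness the same intersection is empty for every $r\in[0,\hat r]$. Hence the infimum of any function $\Phi$ over it equals $+\infty$.

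\textbf{iii) $\Rightarrow$ i).} Apply iii) with $\Phi:=\h$, which is continuous by hypothesis (ii), and any $r\in(0,\hat r]$. The right-hand side of \eqref{igc} is then $+\infty$, while the left-hand side $\h(\hat y^0(\hat S),\hat y(\hat S))$ is a finite real number. Hence \eqref{igc} holds.

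\textbf{i) $\Rightarrow$ ii).} This is the only step with real content, and it is the one I expect to be the main obstacle. Argue by contradiction: assume \eqref{igc} holds for some $r>0$ but $\hat z$ is not isolated. Then for every $j$ with $1/j<r$ there is a feasible embedded strict-sense process $z_j\in X_{\W_+}$ with $\d(z_j,\hat z)<1/j$ and $(y^0_j,y_j,\beta_j)(S_j)\in \cS\times[0,K]$.

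By Remark \ref{Rdinfty}, $\d(z_j,\hat z)\to0$ implies $d_\infty(z_j,\hat z)\to0$. In particular $|S_j-\hat S|\to 0$, and since the trajectories are extended as constants beyond their final times, one gets
$$(y^0_j,y_j)(S_j)\to(\hat y^0,\hat y)(\hat S).$$
To see this, write
$$|y_j(S_j)-\hat y(\hat S)|\le |y_j(S_j)-\hat y(S_j)|+|\hat y(S_j)-\hat y(\hat S)|.$$
The first term is bounded by the $L^\infty$ distance. The second tends to zero by continuity of $\hat y$ on $\rr_+$, where $\hat y$ is extended as a constant after $\hat S$.

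By continuity of $\h$, it follows that $\h((y^0_j,y_j)(S_j))\to\h(\hat y^0(\hat S),\hat y(\hat S))$. Now for $j$ large, with $1/j<r$, each point $(y^0_j,y_j,\beta_j)(S_j)$ belongs to $\mathcal{R}^{r}_{\W_+}(\hat z)\cap(\cS\times[0,K])$. Therefore the infimum on the right-hand side of \eqref{igc} is at most $\lim_j\h((y^0_j,y_j)(S_j))=\h(\hat y^0(\hat S),\hat y(\hat S))$. This contradicts the strict inequality in \eqref{igc}.

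The only delicate point is passing from $\d$-closeness to closeness of endpoints when the final times $S_j$ and $\hat S$ differ. This is handled by the constant extension in Remark \ref{Rdinfty} together with the uniform continuity of $\hat y^0$ and $\hat y$ on compact intervals.
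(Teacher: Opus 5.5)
Your proof is correct and follows essentially the route the paper intends. The paper gives no written proof: it defers to \cite[Prop.~2.1]{FM222} and to the preceding remark that, because $\h$ and the input--output map are continuous, a local infimum gap just means that no feasible embedded strict-sense process lies in a small $\d$-neighbourhood of $\hat z$. Your argument makes that remark precise. The implications ii)$\Rightarrow$iii)$\Rightarrow$i) are trivial with $\inf\emptyset=+\infty$, and your contradiction argument for i)$\Rightarrow$ii) via Remark~\ref{Rdinfty} is the actual content. As a small simplification, with the constant extensions you can evaluate at $s=\max\{S_j,\hat S\}$ to get $|(y^0_j,y_j)(S_j)-(\hat y^0,\hat y)(\hat S)|\le\|(y^0_j,y_j)-(\hat y^0,\hat y)\|_{L^\infty(\rr_+)}$ directly.
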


\subsection{Embedded strict-sense minimizers with local infimum gap  are limits  of isolated processes}\label{SubIGstrict}

Let us begin with the notions of  embedded strict-sense process and infimum gap related to it. 
\begin{definition}\label{gap-generator2}  Let $ \bar z = (\bar S,\bar w^0,\bar w,\bar y^0,\bar y,\bar \beta)$ be a  feasible  embedded  strict-sense process. We say that $\bar z$ is a {\em local strict-sense minimizer}  if  
	there exists $r>0$ such that   
	\bel{igc2} 
	\h\big(\bar{y}^0(\bar{S}),\bar{y}(\bar{S}) \big) = \min_{(y^0,y,\beta)\in \mathcal{R}^{r}_{\W_+}(\bar z) \cap (\cS\times[0,K]) }\h(y^0,y).  
	\eeq 
	Furthermore,  we say that {\em there is a local infimum gap at the local strict-sense minimizer $\bar z$}   if    $\bar z$ is not a local minimum of the extended problem, namely,  if, for any $\varepsilon >0$, there exists a feasible extended  process   $z_\varepsilon = ( S_\varepsilon ,w^0_\varepsilon, w_\varepsilon,   y^0_\varepsilon, y_\varepsilon,\beta_\varepsilon)$,
	such that
	$$
		\d(z_\varepsilon,\bar z) < \varepsilon  \  \text{ and } \ \h(y^0_\varepsilon(S_\varepsilon),y^\varepsilon(S_\varepsilon) ) < 	\h({\bar y^0}(\bar S),\bar y(\bar S)).
		$$
\end{definition}
The following key result establishes a connection between a local infimum gap at a strict-sense minimizer and isolated extended processes. Unlike Lemma \ref{lemma infgap}, which shows that the presence of a local infimum gap at an extended process implies that the process is isolated, the proposition below states that a local strict-sense minimizer exhibiting a local infimum gap—while not necessarily isolated—is the {\em limit} of a sequence of isolated processes.
	\begin{prop}\label{lemma infgap2}  Consider a local   strict-sense minimizer  $(\bar S,\bar w^0,\bar w,\bar y^0,\bar y,\bar \beta)$  at which there is a local infimum gap (according to Def. \ref{gap-generator2}). Then,  
	there is a sequence  $(\hat S_n,{\hat w^0}_n,\hat w_n ,    {\hat y^0}_n,\hat  y_n,\hat\beta_n)_{n}$  of isolated feasible extended processes such that
		$$
		 \lim_{n\to \infty}\d\big((\hat S_n,{\hat w^0}_n,\hat w_n ,  {\hat y^0}_n,\hat  y_n,\hat\beta_n), (\bar S,\bar w^0,\bar w,\bar y^0,\bar y,\bar \beta)\big) =0.
		$$ 
 { In} particular, we have
		$ \lim_{n\to \infty} ({\hat y^0}_n(S_n),\hat  y_n(S_n)) = (\bar y^0(\bar S),\bar  y(\bar S))$.
\end{prop}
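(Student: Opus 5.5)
The plan is to take the approximating extended processes to be exactly the processes $z_\varepsilon$ supplied by the definition of local infimum gap (Definition \ref{gap-generator2}), and then to show that they are isolated once $\varepsilon$ is small. Let $r>0$ be the radius for which \eqref{igc2} holds at $\bar z$. For $\varepsilon_n:=\min\{1/n, r/2\}$, Definition \ref{gap-generator2} provides a feasible extended process $\hat z_n := z_{\varepsilon_n}$ with
$$
\d(\hat z_n,\bar z)<\varepsilon_n \quad\text{and}\quad \h\big(\hat y^0_n(\hat S_n),\hat y_n(\hat S_n)\big)<\h\big(\bar y^0(\bar S),\bar y(\bar S)\big).
$$
The convergence $\d(\hat z_n,\bar z)\to 0$ is then built in. The convergence of the endpoints follows from Remark \ref{Rdinfty}, since $|\hat S_n-\bar S|\to0$ and the trajectories converge uniformly after being extended constantly beyond their final times.

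To prove that each $\hat z_n$ is isolated (Definition \ref{Diso}), I would use the continuity of $\h$ together with the continuity of the input--output map. Set $\eta_n:=\h(\bar y^0(\bar S),\bar y(\bar S))-\h(\hat y^0_n(\hat S_n),\hat y_n(\hat S_n))>0$. By continuity of $\h$ at the endpoint of $\hat z_n$, and by Remark \ref{Rdinfty} applied with $\hat z_n$ as the reference process, there is $\rho_n\in(0,r/2)$ with the following property: every extended process $z$ with $\d(z,\hat z_n)<\rho_n$ satisfies
$$
\h\big(y^0(S),y(S)\big)<\h\big(\hat y^0_n(\hat S_n),\hat y_n(\hat S_n)\big)+\eta_n=\h\big(\bar y^0(\bar S),\bar y(\bar S)\big).
$$

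Now suppose some embedded strict-sense process $z$ with $\d(z,\hat z_n)<\rho_n$ were feasible. The key step is the triangle-type estimate
$$
\d(z,\bar z)\le \d(z,\hat z_n)+\d(\hat z_n,\bar z)+\text{(error)}<r.
$$
If this holds, the endpoint of $z$ lies in $\mathcal R^r_{\W_+}(\bar z)\cap(\cS\times[0,K])$, and its cost is strictly below $\h(\bar y^0(\bar S),\bar y(\bar S))$. This contradicts \eqref{igc2}. Hence $\mathcal R^{\rho_n}_{\W_+}(\hat z_n)\cap(\cS\times[0,K])=\emptyset$, which means $\hat z_n$ is isolated.

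The main obstacle is that $\d$ is not literally a metric: the $L^1$ norm is taken on $[0,S_1\wedge S_2]$, and this interval changes with the pair of processes. To get the triangle estimate, I would split $[0,S\wedge\bar S]$ into $[0,S\wedge\hat S_n\wedge\bar S]$ and the remaining piece. The remaining piece has length at most $|S-\hat S_n|+|\hat S_n-\bar S|$. Since $w^0+|w|=1$, the integrand there is bounded by $2$, so the error term is at most $2(|S-\hat S_n|+|\hat S_n-\bar S|)$. This gives $\d(z,\bar z)\le 3\big(\d(z,\hat z_n)+\d(\hat z_n,\bar z)\big)$. Accordingly, I would shrink $\varepsilon_n$ and $\rho_n$ to be smaller than $r/6$, so that the right-hand side is below $r$.
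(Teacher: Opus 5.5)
Your proof is correct and follows the paper's argument. The paper also takes $\hat z_n$ from the definition of local infimum gap. It then uses the triangle inequality for $\d$ together with the local minimality of $\bar z$ to show that $\hat z_n$ has a local infimum gap in the sense of Definition \ref{gap-generator} (with radius $\delta/2$), and cites Lemma \ref{lemma infgap} to get isolation; you instead write out the continuity argument behind that lemma directly.

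Your factor $3$ is unnecessary, though harmless. When $\hat S_n<S\wedge\bar S$, the extra piece costs at most $2(S\wedge\bar S-\hat S_n)=|S-\hat S_n|+|\hat S_n-\bar S|-|S-\bar S|$. Hence $\d$ satisfies the ordinary triangle inequality, which is the form the paper uses.
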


\begin{proof}
By hypothesis,  $\bar z=(\bar S,\bar w^0,\bar w,   \bar y^0,\bar  y,\bar\beta)$ is a local   strict-sense minimizer at which there is a local infimum gap. Therefore, since 
$\bar z$ is a local strict-sense minimizer, there exists $\delta>0$ such that, for every feasible embedded strict-sense process $(S, w^0, w,  y^0, y,\beta)$ satisfying 
\begin{equation}\label{in1hyp}\d\big((S, w^0, w,  y^0, y,\beta),(\bar S,\bar w^0,\bar w,   \bar y^0,\bar  y,\bar\beta)\big)<\delta,\end{equation} one has
\begin{equation}\label{in1}
\h(\bar y^0(\bar S),\bar  y(\bar S)) \leq \h( y^0(S),  y(S)).
\end{equation}
Moreover, because a local infimum gap occurs at $\bar z$, if  $(\varepsilon_n)_n\subset  (0,\delta/2) $ is a sequence decreasing to $0$, for every index  $n$   we can  choose a feasible extended process $\hat z_n=(\hat S_n,{\hat w^0}_n,\hat w_n ,   {\hat y^0}_n,\hat  y_n,\hat\beta_n)$ such that
\begin{equation}\label{limite} 
\h({\hat y^0}_n(\hat S_n),\hat  y_n(\hat S_n)) < \h (\bar y^0(\bar S),\bar  y(\bar S))  \quad \text{and}\quad \d\big(\hat z_n, \bar z\big) < \varepsilon_n .
\end{equation}
Hence, for any feasible embedded strict-sense process $z=(S, w^0, w,  y^0, y,\beta)$ such that 
$
\d\big(z, \hat z_n\big)< \delta/2
$,
one gets 
$$
\displaystyle\d\big(z, \bar z\big)\le  \d\big(z, \hat z_n\big) 
+ \d\big( \hat z_n , \bar z \big)  < \frac\delta 2 + \frac\delta 2 = \delta,
$$
namely \eqref{in1hyp}.
 In  turn,  this  yields \eqref{in1}.
It follows that for each $n$ and every  feasible embedded strict-sense process $(S, w^0, w,  y^0, y,\beta)$ such that $$\d((S, w^0, w,  y^0, y,\beta), (\hat S_n,{\hat w^0}_n,\hat w_n ,{\hat y^0}_n,\hat  y_n,\hat\beta_n)<\displaystyle\frac\delta 2$$  one has 
$$
\h({\hat y^0}_n(S_n),\hat y_n(S_n)) < \h (\bar y^0(\bar S),\bar  y(\bar S)) \le  \h({y^0}(S), y(S)).
$$
Therefore, for every $n$,  the extended process $(\hat S_n,{\hat w^0}_n,\hat w_n ,{\hat y^0}_n,\hat  y_n,\hat\beta_n)$ exhibits a local infimum gap and is thus isolated by Lemma \ref{lemma infgap}. The conclusion follows since the sequence
$(\hat S_n,{\hat w^0}_n,\hat w_n ,{\hat y^0}_n,\hat  y_n,\hat\beta_n)_{n}$ converges, in the $\d$-topology, to the embedded strict-sense process $(\bar S,{\bar w^0},\bar w ,  {\bar y^0},\bar  y,\bar\beta)$ (see \eqref{limite}). The convergence  $\ds\lim_{n\to \infty} ({\hat y^0}_n(S_n),\hat  y_n(S_n)) = (\bar y^0(\bar S),\bar  y(\bar S)) $ is a consequence of the continuity of the input–output map.
\end{proof}

	\subsection{An illustrative example}\label{Sex}

    In this subsection we exhibit a very simple example where both types of local infimum gap investigated in this paper occur. In particular, we identify a strict-sense minimizer  which is not a local minimizer of the extended problem and also an extended minimizer with cost strictly smaller than the infimum cost of the strict-sense problem.

 Let us consider the fixed final constraint  $$\T:= \{1\}\times \left([0,1]\times\{1\}\right),$$ the payoff  
	$$\Psi:\rr^2\to\rr \qquad \Psi(x^1,x^2):=(x^1-1)^2,$$ and  the following minimum control problem with a scalar, unbounded control:
	$$\text{minimize} \  \Psi(x^1(T),x^2(T)) 
	$$
	over controls $u\in L^1([0,T],[0,\infty))$ and trajectories $(x,v)=(x^1,x^2,v)\in AC([0,T],\rr^3)$ satisfying$$
	\left\{
	\begin{array}{l}
		\displaystyle\frac{dx}{dt}(t)=f(x(t)) + g_1(x(t))u(t), \\[1.0ex]
		\displaystyle\frac{dv}{dt}(t)=|u(t)| \\\\
		(x^1,x^2,v)(0)=(0,0,0), \\\\ (T,x^1(T),x^2(T))\in\T  \qquad v(T)\le K:=1,
	\end{array}\right.
	$$
	where  $$f(x):=\begin{pmatrix}0\\1\end{pmatrix},\qquad g_1(x):=\begin{pmatrix}1\\\eta(x^2)\end{pmatrix}, \,\,\,\,  \,\,\,\eta(x^2):=\frac12\sin(\pi( x^2-1/2))-\frac12.
	$$
	Notice that $T\equiv 1$, for any feasable process $(u,x)(\cdot)$.
	It is immediate to see that   the curve   $(\bar x^1,\bar x^2,\bar v)(t) = (0,t,0)$  is the only  trajectory of the  system which is also feasible, namely it is complying with the final target.  This trajectory is obtained by implementing the control $\bar u\equiv 0$. The  {final position is $(\bar x^1,\bar x^2)(1)=(0,1)$,} so that the corresponding optimal cost is $ \Psi(0,1)= 1$.
	
	The impulsive extension is given by
	$$\text{minimize} \  \Psi(y^1(S),y^2(S)) $$
	over $S>0$, controls $(w^0,w)\in L^1([0,S],W)$, 
		and trajectories 
		
		\noindent $(y^1,y^2,\beta)\in AC([0,S],\rr^3)$ satisfying 
	$$
	\left\{\begin{array}{l}
		\displaystyle\frac{dy^0}{ds}(s)=w^0(s), \\[1.0ex]
		\displaystyle\frac{dy}{ds}(s)=f(y(s))w^0(s) + g_1(y(s))w(s)\\[1.0ex]
		\displaystyle\frac{d\beta}{ds}(s)=|w(s)|, \quad\text{a.e. $s\in[0,S]$,} \\[1.0ex]
		(y^0,y^1,y^2,\beta)(0)=(0,0,0, 0), \\\\   y^0(S)=1, \quad (y^1, y^2)(S)\in \T, \quad \beta(S)\le 1,
	\end{array}\right.
	$$
	where $W:=\{(w^0,w)\in\rr^2: \ w^0, w\ge0, \ w^0+|w|=1\}$. The embedded strict-sense process corresponding to the original process $(1,\bar u, \bar x^1,\bar x^2,\bar v)$ is  
	
	\noindent$\bar z:=(\bar S, \bar w^0,\bar w, \bar y^0,\bar y^1,\bar y^2,\bar\beta),$ where 
	$$
	\bar S=1, \quad (\bar w^0,\bar w)\equiv (1,0), \quad
	(\bar y^0,\bar y^1,\bar y^2,\bar\beta)(s)=(s, 0,s,0) \ \  \text{for all $s\in[0,1]$,}	
	$$
	Hence, $\bar z$   is a strict-sense minimizer for the embedded strict sense problem, with cost $\Psi(\bar y^1(\bar S),\bar y^2(\bar S))=1$.  We claim that there is a local infimum gap at   the strict-sense minimizer $\bar z$. Indeed, for any $r\in(0,1]$, the extended control $(S_r, w^0_r,w_r)$ defined as  
	$$
	S_r:=1+r, \quad (w^0_r,w_r)(s):=\begin{cases} (1,0) &\quad\text{if $s\in[0,1]$,}\\
		(0,1) &\quad\text{if $s\in (1,1+r]$}\end{cases}
	$$
	gives rise to the feasible extended trajectory
	$$
	(y^0_r,y^1_r,y^2_r,\beta_r)(s)=\begin{cases} (s, 0,s,0) &\quad\text{if $s\in[0,1]$,}\\
		(1,s-1,1,s-1) &\quad\text{if $s\in(1,1+r]$.}\end{cases}
	$$ Considering the corresponding processes 
	$z_r:=(S_r, w^0_r, w_r, y^0_r,y^1_r, \beta_r)$, $r\in[0,1]$, one has  
	$$ \d\big(z_r,\bar z\big) =|\bar S-S_r|=r\qquad \Psi(y^1_r(S_r),y^2_r(S_r))=1-r<1=\Psi(\bar y^1(\bar S),\bar y^2(\bar S)),$$ 
	which shows that there is an infimum gap at $\bar z$.
	
%

\section{Infimum gaps and abnormal minimizers}\label{mainsection}

 In this section, we will present the main results concerning the relation between infimum gaps and abnormality.  In particular, we will show that the presence of a local infimum gap for both an extended process and a strict-sense minimizer implies higher-order abnormality. 
	\subsection{Iterated Lie brackets}\label{sps}   In order to give the notion of higher-order  extremal, let us  recall some basic facts concerning iterated  Lie brackets.
		If $h_1$, $h_2$ are $C^1$ vector fields on $\rr^n$,
		 the {\it Lie bracket of $h_1$ and $h_2$} is defined (on any local system of coordinates),
		  as
	$$
x\mapsto[h_1,h_2](x) := Dh_2(x)\cdot h_1(x) -  D h_1(x)\cdot h_2(x) (= - [h_2,h_1](x)).
	$$ 
	As is well-known, the map $[h_1,h_2]$ is a true
	vector field, i.e. it can be defined intrinsically. If the vector fields are sufficiently regular  one can iterate the bracketing process: for instance, given a $4$-tuple ${\bf h}:=(h_1,h_2,h_3,h_4)$ of vector fields  one can construct  the brackets $[[h_1,h_2],h_3]$,  $[[h_1,h_2],[h_3,h_4]]$,  $[[[h_1,h_2],h_3],h_4]$, $[[h_2,h_3],h_4]$.   Accordingly, one can consider the  {\it  (iterated)  formal brackets } $B_1:=[[X_1,X_2],X_3]$, $B_2:=[[X_1,X_2],[X_3,X_4]]$, $B_3:=[[[X_1,X_2],X_3],X_4]$, $B_4:=[[X_2,X_3],X_4
	]$ (regarded as  sequence of letters $X_1,\ldots,X_4$, commas, and left and right square  parentheses), so that, with obvious meaning of the notation, one has $B_1({\bf h}) = [[h_1,h_2],h_3]$,   $B_2({\bf h}) = [[h_1,h_2],[h_3,h_2]]$, $B_3({\bf h}) =[[[h_1,h_2],h_3],h_4]$, $B_4({\bf h}) =[[h_2,h_3],h_4]$. 
	
	The {\it length} of a  formal bracket is   the number of letters that are   involved in it.  For instance, the above  brackets $B_1, B_2, B_3, B_4$ have lengths equal to  $3$, $4$, $4$, and $3$, respectively.  By convention, we declare that a single  variable $X_i$  is a formal bracket of length $1$.
	
	The  {\it switch-number}  of a (formal)  bracket  ${B}$ is the number $r_{_{B}}$ defined
	recursively on the nested structure of the bracket as (see \cite{FeleqiRampazzo2017}):  
	\vsmm ${r_{_{B}}} := 1,$ if   ${B}$  has length  $1$;  
	$ r_{_{B}}:= 2\big(r_{_{B_1}}+r_{_{B_2}}\big)$ if  ${B}=[B_1,B_2]$. 
	\vsmm
	\noindent For instance, the switch-numbers of $[[X_3,X_4],[[X_5,X_6],X_7]]$ and $[[X_5,X_6],X_7]$ are $28$ and $10$, respectively. If there is no risk of confusion, we sometimes refer informally to the {\it length and switch-number of Lie brackets of vector fields}. \footnote{Instead of speaking of  length and switch-number {\it of formal Lie brackets.} }
	
	The regularity of a (non-formal) iterated Lie  bracket  depends on both the nested structure of the underlying formal brackets and   the involved  vector fields. We will  use the following notion of {\it bracket regularity} for a string of vector fields (for a more  rigorous  definition we refer to \cite{FeleqiRampazzo2017}): 
	\begin{definition}[Bracket regularity]   Fix $k\in\cN$. If $\mu\geq 0$,  $r\ge1$, and $\nu\geq \mu+r$ are integers,  $B = B(X_{\mu+1},\ldots,X_{\mu+r})$ is an iterated  formal bracket, and   ${\bf h}=(h_1,\ldots,h_\nu )$  is a string of vector fields, we say that {\rm  ${\bf h}$ is of class $C^{B+k}$} if there is a $\nu$-tuple $(j_1,\ldots,j_\nu)\in\cN^\nu$  such that
		$h_i$ is of class $C^{j_i}$ for any $i=1,\ldots,\nu$  and 
		$B({\bf h})$ is a vector field of class $C^k$. In this case, we   call $(B,\mathbf{h})$   an {\rm admissible $C^k$ bracket pair.} 
	\end{definition}
	For instance, if   $B=$ $[ [[X_3,X_4],[X_5,X_6]],X_7 ]$,   for any $k\geq 0$, a string  ${\mathbf h}=$ \linebreak
	$(h_1,h_2,h_3,h_4,h_5,h_6,h_7,h_8)$ is of class $C^{B+k}$ provided the vector fields  $h_3,h_4,h_5,h_6$ are of class  $C^{3+k}$ and $h_7$ is of class $C^{1+k}$.


	\subsection{Higher-order extremals}\label{Ss1} 
	Let us  set 
	\bel{DS+}
	\CC:=\{(w^0,w)\in\cR_+ \times \C: \,  w^0+|w|=1\}
	\eeq
	and let us  consider the  {\it unmaximized Hamiltonian} $H: (\rr^n)^2 \times\cR\times\cR\times\cR_+\times\C   \to \cR$\footnote{More precisely, the domain of $H$ is $\rr^n\times(\rr^n)^*\times\rr^*\times\rr^*\times\cR_+^*\times\C$. } $$
	H(x,p,p_0,\pi,w^0,w ):= p_0w^0 + p\cdot\Big(f (x) w^0 +  \sum_{i=1}^{m}  g_{i}(x) w^{i}\Big) + \pi | w|.
	$$  
	
	To give the notion of higher-order extremal, we need one more definition: 
	
	\begin{definition} For every integer $k\geq 0$, 
		we will  use $\B^k$ to denote the (possibly empty)  set of  
		admissible $C^k$  bracket pairs  $(B,\mathbf{h})$,  such that  $\mathbf{h}:=(h_1,\ldots,h_\nu)$   is a $\nu$-tuple of vector fields verifying  $\{h_1,\ldots,h_\nu\}\subseteq \{g_1,\ldots,g_{m_1}\}$.
	\end{definition} 
	Let us consider, for instance, $m_1\ge10$ and the pair 
	$B=[[X_3,[X_4,X_5]],X_6]$, $\mathbf{h} =(h_1,\ldots,h_6):=(g_8,g_{10},g_1,g_4,g_3,g_1)$.
	Then, the vector field $B(\mathbf{h})$ coincides with the iterated Lie bracket $[[g_1,[g_4,g_3]],g_1]$. Moreover,  for every $k\in \mathbb{N}$,  the pair
	$(B,\mathbf{h})$ belongs to  $\in \B^k$ (in particular, $B(\mathbf{h})\in C^k$) provided $g_1\in C^{2+k}$, and $g_{3},g_4\in C^{3+k}$.

	\begin{definition}[Higher-order extremal]\label{HOExtremal} Let $(\bar S,\bar w^0,\bar w , \bar y^0,\bar  y,\bar\beta)$
		be a  feasible extended  process. Let  $\K$  be a QDQ   approximating cone to  the target $\cS$ at $(\bar y^0(\bar S),\bar y(\bar S))$. 
		We say that  $(\bar S,\bar w^0,\bar w , \bar y^0,\bar  y,\bar\beta)$ is  a {\em higher-order  $\h$-extremal} with respect to $\K$  if  there exist  a adjoint map $p\in AC([0,\bar S], \rr^n)$ and multipliers   $(p_0,\pi,\lambda)\in \cR\times \cR_-
		\times \cR_+$  such that following conditions {\rm(i)-(vi)} below are satisfied:
		\begin{itemize}
			\item[{\rm (i)}]  {\sc (non-triviality)} the triple $(p_0, p , \lambda)$ is non trivial, i.e.
			\begin{equation} 
				\label{fe1}
				(p_0, p , \lambda) \not= (0, 0,0) \,;
			\end{equation}
			furthermore, if 
			the trajectory $\bar y $ is not purely impulsive, namely, if 
			$ \bar y^0(\bar S)>0$, then \eqref{fe1} can be strengthened  to 
			\begin{equation}\label{strongfe1}
				(p  , \lambda) \not= (0,0); 
			\end{equation}
			\item[{\rm (ii)}] {\sc (non-tranversality)}  
			\begin{equation}
				\label{fe4}
				(p_0,p(\bar S),\pi) \in \left[-\lambda D\Psi  (\bar y^0(\bar S),\bar y(\bar S))- \K^\perp\right]\times J_K,
			\end{equation}
			where $J_K:=\{0\}$ if $\bar\beta(\bar S) < K $, and $J_K:=\cR_+$ 
			if $\bar\beta(\bar S) = K$; in particular,   
			\bel{piestzero}\pi = 0 \quad\hbox{provided}\quad \bar\beta(\bar S)< K  ; \eeq 
			\item[{\rm (iii)}] {\sc (Hamiltonian equations)}  the path $ (\bar y, p) $ satisfies 
			\begin{equation}
				\label{fe2def}
				\displaystyle  \frac{d}{ds}(\bar y,p) (s)\,=\, {\bf X}_{\bar H}\left(s,\bar y(s), p(s)\right) \quad\text{for a.e. $s\in [0,\bar S]$,}
			\end{equation} 
			where $\bar H=\bar H(s,y,p):= H\big(y,p, p_0,\pi,\bar w^0(s),\bar w(s)\big)$, and ${\bf X}_{\bar H}$ denotes   the ($s$-dependent) Hamiltonian vector field corresponding to ${\bar H}$;\footnote{If ${\mathcal H}(s,y,p)$ is a differentiable map on the cotangent bundle $T^*\cR^n$, in any local system of canonical coordinates $(y,p)$ the Hamiltonian vector field ${\bf X}_{\mathcal H}$ corresponding to ${\mathcal H}$ is defined as    $\ds {\bf X}_{\mathcal H}(s,y,p) := \left(\frac{\partial\mathcal H}{\partial p}, - \frac{\partial\mathcal H}{\partial y}\right)(s,y,p)$, so that \eqref{fe2def} coincides with the extended system coupled with the  usual adjoint equation.}
			\item[{\em (iv)}] {\sc (First order maximization)} For a.e. $s\in [0,\bar S]$, 
			\begin{equation}\label{fe3}
				\begin{array}{lr}
				\displaystyle	\bar H(s,\bar{y}(s),p(s))=  \max_{(w^0,w )\in \CC }  H(\bar y(s), p(s), p_0 ,\pi,w^0,w ),
				\end{array}
			\end{equation}
			 {which, in particular, yields}
			\begin{gather}
				p(s)\cdot g_i(\bar y(s))=0,   \qquad \text{for all } s\in[0,\bar S],\   i=1,\dots,m_1,  \label{pg0hi1}
			\end{gather}
	{	as soon as $\bar\beta(S)<K$;}
			\item[{\em (v)}] {\sc (Vanishing of the Hamiltonian)} 
			\bel{engine}
			\max_{(w^0,w  )\in \CC  }  H(\bar y(s), p(s), p_0 ,\pi,w^0,w )=0, \quad \text{for all }s\in[0,\bar S]
			\eeq
			\item[{\em (vi)}] {\sc (Higher-order conditions)} if  $\bar\beta(S)<K$ and  $(B,\mathbf{h})\in \B^0$,  
			\begin{gather}
				p(s)\cdot  B(\mathbf{h})(\bar y(s))=0,   \quad \text{for all } s\in [0,\bar S] ; \label{pg000hi}
			\end{gather}
			furthermore, if  $(B,\mathbf{h})\in \B^1$    for  a.e.   $s\in [0,S]$, one has\footnote{When $m=m_1=1$ equality \eqref{MP111new} reduces to the Legendre-Clebsch-type  condition 
				$$  p(s)\,\cdot\, \big[{f }, g \big](\bar y(s))\,\bar w^0(s) =0 .$$
			 } 
			\bel{MP111new}
			p(s)\,\cdot\, \bigg(\big[{f  }, B(\mathbf{h})\big](\bar y(s))\,\bar w^0(s) + \ds\sum_{j=m_1+1}^{m}
			\big[g_j,B(\mathbf{h})\big](\bar y(s))\,\bar w^j(s)\bigg) =0.
			\eeq 
		\end{itemize}
	\end{definition}

	\begin{definition}\label{HONormal} 
		Let  $(\bar S,\bar w^0,\bar w , \bar\alpha,  \bar y^0,\bar  y,\bar\beta)$ be  a   feasible extended  process, which, 
 for a given   QDQ   approximating cone  $\K$  to the target $\cS$ at $(\bar y^0(\bar S),\bar y(\bar S))$,
		is  a higher-order  $\h$-extremal wirth respect to $\K$.  We say  that $(\bar S,\bar w^0,\bar w , \bar\alpha,  \bar y^0,\bar  y,\bar\beta)$ is a {\em normal higher-order   $\h$-extremal with  respect to $\K$}  if,  for any  choice  of the multipliers  $(p_0, p , \pi,\lambda)$,   one has $\lambda \ne0$. Otherwise, we say that it  is an  {\em abnormal higher-order   extremal (with  respect to $\K$)}.\footnote{The fact that a higher-order   extremal, as well as  a classical  extremal, is abnormal (with  respect to  some $\K$) does not depend on the cost function $\h$.}
	\end{definition}
	 \begin{remark}{\rm The notion of extremality  depends on the QDQ approximating cone 
$\K$.  This one used in this paper is slightly more general than the one used in the higher-order Maximum Principle established in \cite{AMR20}, since we work with QDQ approximating cones, which generalize the Boltyanski approximating cones  {used} in \cite{AMR20} (see also \cite{AMRCDC}).}\end{remark}

	\subsection{Local infimum gaps and higher-order abnormality}  Let us introduce our main results. 
	\begin{thm}[Local infimum gap at an extended process]\label{ThIsolated}
		Consider a   feasible extended    process $(\bar S,\bar w^0,\bar w ,   \bar y^0,\bar  y,\bar\beta)$  at which  there is  a local  infimum gap.   Then,  for every QDQ   approximating cone  $\K$  to the target $\cS$ at $(\bar y^0(\bar S),\bar y(\bar S))$,  the process $(\bar S,\bar w^0,\bar w ,   \bar y^0,\bar  y,\bar\beta)$ is an abnormal  higher-order   extremal with  respect to $\K$.  
	\end{thm}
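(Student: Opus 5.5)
By Lemma \ref{lemma infgap}, a local infimum gap at $\bar z:=(\bar S,\bar w^0,\bar w,\bar y^0,\bar y,\bar\beta)$ means that $\bar z$ is isolated: for some $r>0$ one has $\mathcal{R}^{r}_{\W_+}(\bar z)\cap(\cS\times[0,K])=\emptyset$. The plan is to turn this emptiness into a set-separation statement and then feed it into the QDQ-based higher-order Maximum Principle machinery (the one behind \cite{AMR20}, generalized to QDQ approximating cones as in \cite{MPR,MPRArXiv}). In that machinery, $\lambda$ plays the role of the cost multiplier. The guiding observation is that, since the isolation property does not involve the cost $\h$, the separating covector can be taken with no cost component, and this forces $\lambda=0$.

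\textbf{Step 1: construct an approximating cone to the reachable set.} Using needle-type variations of $(\bar w^0,\bar w)$, variations of the final time $\bar S$, and the higher-order bracket variations built from the controls $g_1,\dots,g_{m_1}$ of the unconstrained factor $\mathcal{C}_1$, construct a QDQ approximating cone $\mathcal{K}_R$ to the reachable set $\mathcal{R}^{r}_{\W_+}(\bar z)$ at $(\bar y^0,\bar y,\bar\beta)(\bar S)$. This is the main obstacle. The variations are a priori extended, so they must be approximated by controls with $w^0>0$ a.e. without leaving the $\d$-ball of radius $r$. I would first do this for extended controls and then use density of $\W_+$ in $\W$ under $\d$, together with continuity of the input--output map (Remark \ref{Rdinfty}). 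The point that needs care is that the QDQ property (continuous selections with uniform remainder estimates) survives this approximation, and this is where the QDQ notion, rather than Boltyanski cones, is exploited. The generators of $\mathcal{K}_R$ are the transported vectors $M(\bar S,s)\big(f w^0+\sum g_i w^i,\,w^0,\,|w|\big)$ minus the reference velocity, the vectors $\pm B(\mathbf h)(\bar y(s))$ for $(B,\mathbf h)\in\B^0$, and the vectors $[f,B(\mathbf h)]\bar w^0+\sum_{j>m_1}[g_j,B(\mathbf h)]\bar w^j$ for $(B,\mathbf h)\in\B^1$. The bracket generators enter with both signs because $\mathcal{C}_1$ contains the lines $r\mathbf e_i$, and they are realized through the switch-number constructions of \cite{FeleqiRampazzo2017}.

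\textbf{Step 2: apply the open-mapping theorem for QDQs.} Use the QDQ open mapping/set-separation theorem of \cite{PR}, in the form: if two QDQ approximating cones to two sets at a common point are strongly transversal, then the sets intersect near that point. Take $\mathcal{K}_R$ and $\K\times\tilde J_K$, where $\tilde J_K=\rr$ if $\bar\beta(\bar S)<K$ and $\tilde J_K=\rr_-$ otherwise; the latter is an approximating cone to $\cS\times[0,K]$. Since the two sets have empty intersection, the cones cannot be strongly transversal. Proposition \ref{teo2} handles the degenerate case of complementary subspaces, after adding a harmless dimension if needed. Proposition \ref{teo3} then yields a nonzero covector $(p_0,p(\bar S),\pi)$ linearly separating them, and this covector involves no $\h$-component. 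Setting $\lambda:=0$, transversality \eqref{fe4} holds with $J_K$ as stated.

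\textbf{Step 3: recover the remaining conditions.} Propagate $p$ backward by the adjoint equation \eqref{fe2def}. The separation inequalities against the needle generators give the maximization \eqref{fe3}. Inequalities against time variations give \eqref{engine}, using the $p_0$ component and the autonomy of the system. The two-sided bracket generators give \eqref{pg000hi} and \eqref{MP111new} when $\bar\beta(\bar S)<K$. Non-triviality \eqref{fe1} follows since the covector is nonzero. The strengthened version \eqref{strongfe1} when $\bar y^0(\bar S)>0$ follows because $p=0$ with $\lambda=0$ would give $\pi=0$ and, by \eqref{engine}, $p_0\bar w^0=0$ on a set of positive measure, hence $p_0=0$. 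This shows that $\bar z$ is a higher-order extremal admitting multipliers with $\lambda=0$, i.e.\ it is abnormal with respect to $\K$. Since $\K$ was arbitrary, this holds for every QDQ approximating cone.
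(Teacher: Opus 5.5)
Your outline follows the same set-separation route as the proof the paper relies on. The paper gives no proof of its own and defers to \cite{MPR}, adapted to the distance $\d$ in \cite{MPRArXiv}. That route has three steps: isolation via Lemma~\ref{lemma infgap}; a QDQ approximating cone to the strict-sense reachable set at an endpoint not belonging to it, built from needle, time and bracket variations approximated by controls with $w^0>0$ (the $\delta$-dependent maps in the QDQ definition are designed to absorb this approximation); and a separating covector with no cost component, whence $\lambda=0$.

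Two points need care when you write it out. First, a bracket variation of length $\ell\ge 2$ changes $\beta$ by an amount of order $\xi^{1/\ell}\gg\xi$, so bracket generators give no first-order directions in $(y^0,y,\beta)$-space. They should enter only when $\bar\beta(\bar S)<K$: there you separate the projection onto $(y^0,y)$ of the energy-constrained reachable set from the target, since the constraint $\beta\le K$ is then inactive. Second, \eqref{fe1} does not follow from $(p_0,p(\bar S),\pi)\neq 0$ alone. You must exclude $(p_0,p)=(0,0)$ with $\pi<0$; by \eqref{fe3} this forces $\bar w=0$ a.e., making $\bar z$ a feasible embedded strict-sense process and contradicting isolation.
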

	This result holds for  extended   processes  at which  there is  a local  infimum gap with respect to the $L^\infty$-distance between trajectories and was proven in  \cite{MPR} using a set separation approach.  We refer to \cite{MPRArXiv} for the adaptation of the proof of  \cite[Theorem 1]{MPR}  to the present case,   which involves the  control distance $\d$ defined in \eqref{ddeff}.

	Equivalently to  Theorem \ref{ThIsolated}, one has  the following sufficient condition for the absence of  a local infimum gap  at an extended process: 
	\begin{thm}[Higher-order normality and no-gap at an extended process]\label{Thnormality} Suppose that 
		$\hat z=(\hat S,\hat w^0,\hat w ,  \hat y^0,\hat  y,\hat\beta)$ is a   feasible extended   process  which satisfies, for some $r>0$,
		$$
		\h(\hat y^0(\hat S),\hat y(\hat S))\le\inf_{(y^0,y,\beta)\in \mathcal{R}^{r}_{\W_+}(\hat z) \cap (\cS\times[0,K])} \h(y^0,y) .
		$$
		If $(\hat S,\hat w^0,\hat w ,  \hat y^0,\hat  y,\hat\beta)$ is  a normal higher-order  $\h$-extremal
	with respect to some   QDQ approximating cone $\K$ to   $\T$ at $(\bar y^0(\bar S),\bar y(\bar S))$, then 
		 there is no local infimum gap at $(\hat S,\hat w^0,\hat w ,  \hat y^0,\hat  y,\hat\beta)$.  \end{thm}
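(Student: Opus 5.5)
The plan is to argue by contraposition and reduce the statement directly to Theorem \ref{ThIsolated}. We suppose, to the contrary, that $\hat z$ is a normal higher-order $\h$-extremal with respect to $\K$ and that a local infimum gap nevertheless occurs at $\hat z$. We then aim to contradict normality.

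First, if there is a local infimum gap at $\hat z$ in the sense of Definition \ref{gap-generator}, Theorem \ref{ThIsolated} applies to $\hat z$ and to every QDQ approximating cone to $\cS$ at $(\hat y^0(\hat S),\hat y(\hat S))$. In particular it applies to the given $\K$. Hence $\hat z$ is an abnormal higher-order extremal with respect to $\K$: there exist multipliers $(p_0,p,\pi,\lambda)$ satisfying (i)--(vi) of Definition \ref{HOExtremal} with $\lambda=0$. Since abnormality does not depend on $\h$, these are admissible multipliers for the $\h$-extremal condition. By Definition \ref{HONormal}, normality requires $\lambda\neq0$ for every choice of multipliers, so this is a contradiction. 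Therefore no local infimum gap can occur at $\hat z$.

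Second, we check that the displayed inequality hypothesis is consistent with this reading. It says that, on a $\d$-ball of radius $r$, the cost of $\hat z$ does not exceed the strict-sense infimum. A local infimum gap would require the strict inequality \eqref{igc} for some radius, possibly different from $r$. The conclusion ``no local infimum gap'' therefore means that \eqref{igc} fails for every radius. Combined with the hypothesis, this yields equality of the cost of $\hat z$ with the local strict-sense infimum for all small radii. The hypothesis is thus only there to frame the statement as a no-gap result; the logical core is the contrapositive of Theorem \ref{ThIsolated}. We may also remark, via Lemma \ref{lemma infgap}, that ``gap'' is equivalent to ``isolated''. The conclusion can then equivalently be phrased as $\mathcal{R}^{\rho}_{\W_+}(\hat z)\cap(\cS\times[0,K])\neq\emptyset$ for every $\rho>0$.

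The only delicate point is bookkeeping rather than analysis. We must make sure that the cone $\K$ in the normality hypothesis is a QDQ approximating cone at the final point of $\hat z$; the statement's mention of $(\bar y^0(\bar S),\bar y(\bar S))$ should be read as $(\hat y^0(\hat S),\hat y(\hat S))$. We must also make sure that Theorem \ref{ThIsolated} is invoked with the same distance $\d$ used in Definition \ref{gap-generator}. Once these identifications are made, no further estimates are needed.
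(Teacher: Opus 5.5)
Your argument is correct and is the paper's own: the paper gives no separate proof and presents this theorem as the contrapositive of Theorem \ref{ThIsolated}, since a gap yields a multiplier set with $\lambda=0$, which is incompatible with normality. You are also right that the displayed inequality hypothesis plays no logical role, and that $(\bar y^0(\bar S),\bar y(\bar S))$ in the statement should read $(\hat y^0(\hat S),\hat y(\hat S))$.
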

	
 The  main result of this second part of the  paper consists in  following theorem: 
 \begin{thm}[Local infimum gap at a  strict-sense minimizer]\label{Thstrict_sense}  Suppose that  $(\bar S,\bar w^0,\bar w ,   \bar y^0,\bar  y,\bar\beta)$ is  a  strict-sense minimizer at which  there is  a local  infimum gap, and assume  that the target $\T$ is Quasi Prox-Regular at $(\bar y^0(\bar S),\bar y(\bar S))$.  Then, as soon as we choose  the Clarke tangent cone $\K:=T^C_\T(\bar y^0(\bar S),\bar y(\bar S))$ as a  QDQ approximating cone  to the target $\T$ at $(\bar y^0(\bar S),\bar y(\bar S))$,  the process $(\bar S,\bar w^0,\bar w ,   \bar y^0,\bar  y,\bar\beta)$ is an abnormal  higher-order   extremal with respect to $\K$.\footnote{In view of Theorem \ref{Th_T^C},  the Clarke tangent cone $T^C_\T((\bar y^0(\bar S),\bar y(\bar S))$ is effectively a   QDQ approximating cone  to the target $\T$ at $(\bar y^0(\bar S),\bar y(\bar S))$}
\end{thm}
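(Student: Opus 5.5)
We follow the strategy announced in the Introduction. First we apply Proposition \ref{lemma infgap2} to $\bar z$. It yields a sequence of isolated feasible extended processes $\hat z_n=(\hat S_n,\hat w^0_n,\hat w_n,\hat y^0_n,\hat y_n,\hat\beta_n)$ with $\d(\hat z_n,\bar z)\to0$. By Remark \ref{Rdinfty}, this gives $\hat S_n\to\bar S$ and uniform convergence of the extended trajectories. In particular the endpoints $\hat x_n:=(\hat y^0_n,\hat y_n)(\hat S_n)$ converge to $\bar x:=(\bar y^0,\bar y)(\bar S)$. By Lemma \ref{lemma infgap} each $\hat z_n$ carries a local infimum gap.

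Next we choose approximating cones at the nearby endpoints. The cone $T^C_\T(\bar x)$ is tied to $\bar x$, while Theorem \ref{ThIsolated} must be applied at $\hat x_n$. We therefore use $\K_n:=T^C_\T(\hat x_n)$. For large $n$, $\hat x_n\in B_{\delta/2}(\bar x)$, so Quasi Prox-Regularity at $\bar x$ should pass to $\hat x_n$. In case (ii) this is immediate, since $\T$ coincides near $\hat x_n$ with the same $r$-prox-regular set $\tilde\T$. Case (i) is less clear, because $T^C_\T(\hat x_n)$ may differ from $T^C_\T(\bar x)$. There we plan to use a cone that works directly at $\hat x_n$, namely the QDQ cone generated by $G=\Pi_\T$ on $\Gamma=T^C_\T(\bar x)$ translated to $\hat x_n$ whenever $\hat x_n+T^C_\T(\bar x)$ stays in $\T$ locally. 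Otherwise we fall back on the upper semicontinuity of limiting normals below. With Theorem \ref{Th_T^C}, $\K_n$ is a QDQ approximating cone at $\hat x_n$. Theorem \ref{ThIsolated} then gives multipliers $(p_{0,n},p_n,\pi_n,\lambda_n=0)$ satisfying (i)--(vi) along $\hat z_n$ with respect to $\K_n$.

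We then pass to the limit. We normalize so that $|p_{0,n}|+\|p_n\|_\infty+|\pi_n|=1$; this is allowed since $\lambda_n=0$ and nontriviality holds. By linearity of the adjoint equation, $p_n$ is determined by $p_n(\hat S_n)$ and the controls, and the solutions depend continuously on controls in $L^1$, uniformly under the uniform bounds. Hence, along a subsequence, $p_n\to p$ uniformly, and $p_{0,n}\to p_0$, $\pi_n\to\pi$, with $p$ solving the adjoint equation along $(\bar w^0,\bar w)$. The pointwise identities \eqref{pg0hi1}, \eqref{pg000hi} and \eqref{engine} pass to the limit by uniform convergence. The maximization \eqref{fe3} and the a.e.\ condition \eqref{MP111new} pass to the limit after extracting a.e.-convergent subsequences of the controls from $L^1$ convergence. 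The energy condition needs care: if $\bar\beta(\bar S)<K$ then $\hat\beta_n(\hat S_n)<K$ eventually, so $\pi_n=0$ and $\pi=0$, which is consistent with $J_K$. Nontriviality is preserved by the normalization, together with the observation that $p\equiv0$ forces $p_0=0$ by \eqref{engine} (since $w^0=1$ is admissible). The strengthened condition \eqref{strongfe1} follows similarly when $\bar y^0(\bar S)>0$.

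The main obstacle is transversality. We have $(p_{0,n},p_n(\hat S_n))\in -N^C_\T(\hat x_n)=-\overline{\mathrm{co}}\,N_\T(\hat x_n)$, and we need the limit to lie in $-N^C_\T(\bar x)$. The Clarke normal cone is not outer semicontinuous in general. Here prox-regularity helps: for $r$-prox-regular sets $N^P=N=N^C$, and Proposition \ref{Prop_prox_reg}(ii) gives the inequality $\eta_n\cdot(x'-\hat x_n)\le\frac{1}{2r}|\eta_n||x'-\hat x_n|^2$ for $x'\in\T$ near $\hat x_n$. This inequality passes to the limit, so the limit is a proximal normal at $\bar x$. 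In case (i) we would instead aim to show that $T^C_\T(\bar x)\subseteq$ the limit inferior of the cones $T^C_\T(\hat x_n)$, using $\bar x+T^C_\T(\bar x)\subset\T$ locally. Concretely, for $v\in T^C_\T(\bar x)$ we try to build tangent directions at $\hat x_n$ converging to $v$, which yields the polar inclusion in the limit. If this fails, the alternative is to take $\K_n$ to be the translated cone $T^C_\T(\bar x)$ itself, shown to be a QDQ cone at $\hat x_n$ via a projection map as in the proof of Theorem \ref{Th_T^C}. Then $\K_n^\perp$ is constant and the limit is immediate.
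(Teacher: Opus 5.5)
Your route is the paper's: Proposition \ref{lemma infgap2}, then Theorem \ref{ThIsolated} at the isolated approximants $\hat z_n$, then normalization and compactness. Your treatment of case (ii) of Quasi Prox-Regularity is sound: the property passes to the nearby endpoints $\hat x_n$, there $N^C_\T=N^P_\T$, and the uniform inequality of Proposition \ref{Prop_prox_reg}(ii) survives the limit.

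The genuine gap is case (i). There you only list tentative devices, and none of them works in general. Translating $T^C_\T(\bar x)$ to $\hat x_n$ is justified by (i) only when $\hat x_n\in\bar x+T^C_\T(\bar x)$, and nothing in Proposition \ref{lemma infgap2} lets you choose the approximants that way.

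For the other devices, take the (purely geometric) set $\mathcal S=(\{0\}\times[-1,1])\cup\bigcup_{k\ge1}\Sigma_k\subset\mathbb{R}^2$. Here $\Sigma_k$ is the polygonal arc from $(0,2^{-k})$ through $x_k:=(4^{-k},0)$ to $(0,-2^{-k})$.

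The arms of $\Sigma_k$ make an angle $\arctan 2^{-k}$ with the vertical. Hence $N_{\mathcal S}(0)=\mathbb{R}\times\{0\}$ and $T^C_{\mathcal S}(0)=\{0\}\times\mathbb{R}$, so (i) holds at $0$. At the tips, $N_{\mathcal S}(x_k)$ contains the two distinct normal lines of the arms. So $N^C_{\mathcal S}(x_k)=\mathbb{R}^2$ and $T^C_{\mathcal S}(x_k)=\{0\}$, while the contingent cone at $x_k$ is the union of the two rays along the arms. This gives three failures:
\begin{itemize}
\item $T^C_{\mathcal S}(0)\not\subseteq\liminf_k T^C_{\mathcal S}(x_k)$, so the liminf inclusion fails.
\item $T^C_{\mathcal S}(0)$ is not a QDQ approximating cone to $\mathcal S$ at $x_k$, since Definitions \ref{qdq} and \ref{ApprCone} force every QDQ approximating cone into the contingent cone.
\item $(0,1)\in N^C_{\mathcal S}(x_k)$ for every $k$, although $(0,1)\notin N^C_{\mathcal S}(0)$.
\end{itemize}

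Outer semicontinuity of the limiting normal cone does not help either. Theorem \ref{ThIsolated} only places $(p_{0,n},p_n(\hat S_n))$ in the convex cones $-\K_n^\perp$, not in $-N_\T(\hat x_n)$. So in case (i) you have no argument that the limit satisfies $(p_0,p(\bar S))\in-N^C_\T(\bar x)$. Nor do you have, in general, that Theorem \ref{Th_T^C} applies at $\hat x_n$ at all, since condition (i) at $\bar x$ does not propagate to nearby points.

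Do not expect the paper's proof to close this. It uses the Clarke cone at every $(y^0_j(S_j),y_j(S_j))$ via Theorem \ref{Th_T^C}, and asserts that transversality passes to the limit because $N^C_\T=\overline{\mathrm{co}}\,N_\T$. That closedness is exactly what fails in the example above. As written, this step is justified only in case (ii), by the prox-regular argument you give.

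Separately, your nontriviality argument does not work. If $p\equiv0$ and $\C\ne\{0\}$, \eqref{engine} reads $\max\{p_0,\pi\}=0$. This gives only $p_0\le0$, and does not exclude the limit $(p_0,p,\pi)=(0,0,-1)$, which your normalization permits. As in the paper, that limit is ruled out in two cases. When $\bar\beta(\bar S)<K$, it is excluded because $\pi=0$. When $\bar\beta(\bar S)=K>0$, it is excluded by \eqref{fe3}, which would force $\bar w=0$ a.e. The strengthened condition \eqref{strongfe1} then needs its own argument, based on $\bar y^0(\bar S)>0$.
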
	
    \begin{proof} 
  Since $\bar z:=(\bar S,\bar w^0,\bar w ,   \bar y^0,\bar  y,\bar\beta)$ is  a  strict-sense minimizer at which  there is  a local  infimum gap, it follows from Proposition \ref{lemma infgap2}  that there exist  a decreasing sequence $(\rho_j)_{j}$, with $0<\rho_j<1$,  converging to 0 and a sequence of isolated, feasible   extended processes $z_j:=(S_j,w^0_j,w_j,y^0_j,y_j, \beta_j)$ satisfying
\bel{form}
  \d(z_j,\bar z)=|S_j-\bar S|+\int_0^{S_j\land \bar S}[|w^0_j(s)-\bar w(s)|+|w^0_j(s)-\bar w(s)|]\,ds<\rho_j \quad\text{ for any $j\ge 1$.}
    \eeq

  {\em Step 1.}  Thanks to Theorem \ref{Th_T^C},  Lemma \ref{lemma infgap}, and Theorem \ref{ThIsolated}, if for any $j\in\cN$ we use the Clarke tangent cone as  QDQ approximating cone to the target at $(y^0_j(S_j),y_j(S_j))$,  we obtain that there exist a lift $(y_j, p_j)\in AC([0, S_j],(\rr^n)^2)$ and multipliers   $(p_{0_j},\pi_j,\lambda_j=0)\in \cR\times \cR_-
		\times \cR_+$  such that the following conditions are satisfied: 
			\begin{equation} 
				\label{pfe1}
				|p_{0_j}|+\| p_j\|_{L^\infty(0, S_j) }\ne 0 \quad\text{ ($\| p_j\|_{L^\infty(0, S_j)} \ne0$ if 	$y^0_j(S_j)>0$)	   ; }
			\end{equation}	
			 \begin{equation}
				\label{pfe4}
				(p_{0_j},p_j(S_j),\pi_j) \in   - N^C_\T(y_j^0(S_j),y_j(S_j))\times J_K,
			\end{equation}
			where, setting $\beta_j(s)\colon=\int_0^s|\omega_j(\sigma)| d\sigma$, we take $J_K:=\{0\}$ if $\beta_j(S_j) < K $, and $J_K:=[0,+\infty)$ 
			if $\beta_j(S_j) = K$. In particular,   
			\bel{ppiestzero}\pi_j = 0 \quad\hbox{provided}\quad \beta_j(S_j)< K   ; \eeq 
					\item 	\begin{equation}
				\label{fe2}
				\displaystyle  \frac{d}{ds}(y_j,p_j) (s)\,=\, {\bf X}_{H_j}\left(s, y_j(s), p_j(s)\right) \quad\text{for a.e. $s\in [0,S_j]$,}
			\end{equation} 
			where $H_j= H_j(s,y,p):= H\big(y,p, p_{0_j},\pi_j, w^0_j(s), w_j(s)\big)$, and ${\bf X}_{H_j}$ denotes   the ($s$-dependent) Hamiltonian vector field corresponding to ${H_j}$;
		\begin{equation}\label{pfe3}
				\begin{array}{l}
					\displaystyle H_j\Big(s, y_j(s), p_j(s) \Big)= \max_{(w^0,w )\in \CC }  H(y_j(s), p_j(s), p_{0_j} ,\pi_j,w^0,w ), \quad\text{for a.e. $s\in [0,\bar S]$,}
		\end{array}
			\end{equation}
			and, as soon as $\beta_j(S)<K$     ;
		 		\begin{gather}
			p_j(s)\cdot g_i( y_j(s))=0,   \qquad \text{for all } s\in[0, S_j],\   i=1,\dots,m_1  ;  \label{ppg0hi1}
			\end{gather} 
		  	\bel{pengine}
			\max_{(w^0,w  )\in \CC  }  H(y_j(s), p_j(s), p_{0_j} ,\pi_j,w^0,w )=0, \quad \text{for all }s\in[0, S_j]    ;
			\eeq
			\item   if  $\beta_j(S_j)<K$ and  $(B,\mathbf{h})\in \B^0$,  
			\begin{gather}
				p_j(s)\cdot  B(\mathbf{h})( y_j(s))=0,   \quad \text{for all } s\in [0, S_j]   ; \label{ppg000hi}
			\end{gather}
		 furthermore, if  $(B,\mathbf{h})\in \B^1$,  for  a.e.   $s\in [0,S_j]$ one has 
			\bel{pMP111new}
			p_j(s)\,\cdot\, \bigg(\big[{f  }, B(\mathbf{h})\big](y_j(s))\, w_j^0(s) + \ds\sum_{i=m_1+1}^{m}
			\big[g_i,B(\mathbf{h})\big]( y_j(s))\,w_j^i(s)\bigg) =0.
			\eeq

According to Remark \ref{Rdinfty} and \eqref{form},  $\lim_{j\to\infty}d_\infty(z_j,\bar z) = 0$. In particular, the (constant and continuous extensions to $\mathbb{R}_+$ of) $(y^0_j, y_j, \beta_j)$  converge uniformly to (the constant and continuous extension to $\mathbb{R}_+$ of) $(\bar{y}^0, \bar{y}, \bar{\beta})$ in $\mathbb{R}_+$. Moreover, we have
\[
\lim_{j \to \infty} (y^0_j, y_j, \beta_j)(S_j) = (\bar{y}^0, \bar{y}, \bar{\beta})(\bar{S}).
\]

  {\em Step 2.}  Under the regularity and boundedness assumptions we have adopted, the Hamiltonian system associated with each $(y_j, p_j)$ becomes the usual system 
\begin{equation}\label{adjeq}
 \left\{
\begin{split}
\frac{dy_j}{ds} (s) & = f( y_j(s) )w_j^0(s)+ \sum_{i=1}^{m}g_{i}( y_j(s))w_j^{i}(s), \\
\frac{dp_j}{ds} (s) & =-p_j(s)\cdot \left[\frac{\partial f}{\partial x}( y_j(s) )w_j^0(s)+ \sum_{i=1}^{m}\frac{\partial g_{i}}{\partial x}( y_j(s))w_j^{i}(s)\right], \ \ \text{for a.e. $s\in[0,S_j]$,}
\end{split}
\right.
\end{equation}
with the boundary conditions
$$
y_j(0) = \xbo, \quad (p_{0_j},p_j(S_j),\pi_j) \in -N^C_\T(y^0_j(S_j),y_j(S_j))\times J_K.
$$  
By normalizing the multipliers, for each \( j \in \mathbb{N} \), we can assume that
\begin{equation}\label{ntriN}
|p_{0_j}| + \|p_j\|_{L^\infty(0, S_j)} + |\pi_j| = 1.
\end{equation}
Therefore, the sequences \( (p_{0_j})_j \) and \( (\pi_j)_j \) are bounded in \( \mathbb{R} \). Moreover, due to the linearity of the adjoint equation, the sequence  of (continuous, constant extension to \( \mathbb{R}^+ \) of) the functions \( (p_j)_j \) is equibounded and equi-Lipschitz continuous (on \( \mathbb{R}^+ \)). Hence, by the Ascoli–Arzelà Theorem, possibly up to extracting a subsequence  which we still denote by \( (p_{0,j}, p_j, \pi_j)_j \), there exist
\[
p_0 = \lim_{j \to \infty} p_{0_j}\in\cR,\quad \pi = \lim_{j \to \infty} \pi_j \leq 0,
\]
and \( p \in AC(\mathbb{R}_+, \mathbb{R}^n) \) such that
\[
\|p_j - p\|_{L^\infty(\mathbb{R}_+)} \to 0, \quad p_j(S_j)\to p(\bar S) \quad \text{as } j \to \infty.
\]
 At this point, standard arguments (see, for instance, \cite{MRV}) imply that $p$ solves the adjoint equation
$$
\frac{dp}{ds} (s)  =-p(s)\cdot \left[\frac{\partial f}{\partial x}( \bar y(s) )\bar w ^0(s)+ \sum_{i=1}^{m}\frac{\partial g_{i}}{\partial x}( \bar y(s))\bar w^{i}(s)\right], \ \ \text{for a.e. $s\in[0,\bar S]$.}
$$
The transversality condition follows immediately from the fact that the Clarke normal cone coincides with the convex hull of the limiting normal cone; consequently, the condition $(p_{0_j},p_j(S_j),\pi_j)\in -N^C_\T(y^0_j(S_j),y_j(S_j))\times J_K$ is preserved at the limit, so
$$(p_{0},p(\bar S), \pi)\in -N^C_\T(\bar y^0(\bar S),\bar y(\bar S))\times J_K.$$
 Moreover, the maximality condition  \eqref{fe3} is also easily obtained by passing to the limit as \( j \to + \infty \) in \eqref{pfe3}. 

  {\em Step 3.}  It remains to prove the nontriviality condition and, if \( \bar{\beta}(\bar{S}) < K \), also the higher-order conditions.
  
  Suppose first $\bar{\beta}(\bar{S}) = \int_0^{\bar S}|\bar\omega(s)| ds < K$.  Then also $\beta_j(S_j) < K$, which implies that 
 $\pi_j = 0$, { for  any $j$ sufficiently large}. Hence, from \eqref{ntriN} it follows that $p_{0_j} + \|p_j\|_{L^\infty(0, S_j)} = 1$. So, in the limit we get  $\pi=0$ and the nontriviality condition $p_{0} + \|p\|_{L^\infty(0, \bar S)} = 1\ne0$. If, in addition,  $\bar{y}^0(\bar{S}) > 0$, the last nontriviality condition reduces to $\|p\|_{L^\infty(0, \bar S)}\ne0$. Indeed, if  it were  $p\equiv0$, then we would obtain $p_0=1$, $\pi=0$, and, integrating the maximality condition  \eqref{fe3}, we  would get the  contradiction
 $$
0< \bar{y}^0(\bar{S})=\int_0^{\bar S}\bar w^0(s)\,ds=0.
$$ 
 Therefore, $(p_0,p,\pi=0)$ satisfies the nontriviality condition. Since  $\beta_j(S_j) < K$ for  $j$ sufficiently large, \eqref{ppg0hi1} and the higher-order conditions \eqref{ppg000hi}, \eqref{pMP111new} are valid for such $j$. Taking the limit as $j\to+\infty$, these conditions imply the analogous properties \eqref{pg0hi1}, \eqref{ppg000hi}, and \eqref{pMP111new} for $p$.
 
Assume now that  $\bar{\beta}(\bar{S})= K$. In this case, passing to the limit in  \eqref{ntriN}, even if we know that $|p_{0_j}| + \|p_j\|_{L^\infty(0, S_j)}>0$ for all $j$,  in principle we might have $|p_{0}| + \|p\|_{L^\infty(0, \bar S)}=0$, i.e. $p_0=0$, $p\equiv 0$, and $\pi=-1$. However, integrating the maximality condition  \eqref{fe3} we would get the contradiction 
$$
0< K=\bar{\beta}(\bar{S})=\int_0^{\bar S}|\bar w(s)|\,ds=0.
$$ 
Hence, $|p_{0}| + \|p\|_{L^\infty(0, \bar S)}\ne0$ and the nontriviality condition \eqref{fe1} is satisfied.

Now let us show that \eqref{strongfe1} holds true provided  $\bar y^{0}(\bar S)>0$. First of all, observe that  $\lambda=\lim_j \lambda_j =0$, since  $\lambda_j=0$ for every $j\in\mathbb{N}$. Therefore, we    must show that  $\|p\|_{L^\infty(0, \bar S)}\ne 0$.  If $p\equiv0$, we would be able to deduce from  \eqref{fe3} that 
\begin{equation}\label{uffa}
p_0\bar w^0(s) + \pi |\bar w(s)| = \underset{(w^0,w) \in \CC}{\max}
\bigg\{ 
p_0w^0 + \pi |w|
\bigg\}\,=  0\, \mbox{ a.e. }s \in [0,\bar S].
\end{equation}
If $\pi<0$, it would follow from this relation that $p_0 = 0$. This cannot be true since $(p_0,p)=(0,0)$ is in contradiction with  \eqref{fe1}. If, on the other hand, $\pi=0$, it would follow from  \eqref{uffa} that  $p_0<0 $.  But then we would have $\bar w^0(s) =0$  
a.e.,
which would imply
$\bar y^0(\bar S)=0,
$
in contradiction with the hypothesis that   $\bar y^0(\bar S)>0$.  
 
By combining the previous results, we have established the existence of a set of multipliers \((p_0, p, \pi, \lambda = 0)\) for which the strict-sense minimizer \((\bar S,\bar w^0,\bar w ,   \bar y^0,\bar  y,\bar\beta)\) is an abnormal higher-order extremal according to Definition \ref{HONormal}.
  \end{proof}

\end{document}